\definecolor{red}{rgb}{1,0,0}
\definecolor{blue}{rgb}{.2,.2,.8}
\newtheorem{theorem}{Theorem}[section]
\newtheorem{lemma}{Lemma}
\newtheorem{corollary}[theorem]{Corollary}
\newtheorem{conjecture}{Conjecture}
\newtheorem{inequality}{Inequality}[section]
\theoremstyle{remark}
\begin{document}

\title{Truncated theta series and \\partitions into distinct parts}
\author{Mircea Merca\\
	\footnotesize Department of Mathematics, University of Craiova, 200585 Craiova, Romania\\
	\footnotesize Academy of Romanian Scientists, Ilfov 3, Sector5, Bucharest, Romania\\
	\footnotesize mircea.merca@profinfo.edu.ro
}
\date{}
\maketitle

\begin{abstract}
	Linear inequalities involving Euler’s partition function $p(n)$ have been the subject of recent studies. 
	In this article, we consider the partition function $Q(n)$ counting the partitions of $n$ into distinct parts.
	Using truncated theta series, we provide four infinite families of linear inequalities for $Q(n)$ and partition theoretic interpretations for these results.
\\
\\
{\bf Keywords:} inequalities, partitions, recurrences, theta series
\\
\\
{\bf MSC 2010:}  05A17, 11P81, 11P83
\end{abstract}

\section{Introduction}

In this paper, in order to simplify the expressions, we denote  the $n$th triangular number by
$$T_n=n(n+1)/2$$
and the $n$th generalized pentagonal number by
$$G_n=T_n-T_{\lfloor n/2 \rfloor},$$
for any nonnegative integer $n$.

In \cite{Andrews12}, G.E. Andrews and M. Merca considered Euler's pentagonal number theorem
\begin{equation}\label{eq:1}
\sum_{n=0}^{\infty} (-1)^{T_n} q^{G_n} = (q;q)_\infty
\end{equation} 
and obtained the following truncated version:
\begin{equation}\label{eq:2}
\frac{1}{(q;q)_\infty} \sum_{n=0}^{2k-1} (-1)^{T_n} q^{G_n}
= 1+(-1)^{k-1} \sum_{n=1}^{\infty} \frac{q^{\binom{k}{2}+(k+1)n}}{(q;q)_n} 
\begin{bmatrix}
n-1\\k-1
\end{bmatrix},
\end{equation} 
where 
\begin{align*}
& (a;q)_n = \begin{cases}
1, & \text{for $n=0$,}\\
(1-a)(1-aq)\cdots(1-aq^{n-1}), &\text{for $n>0$,}
\end{cases}\\
& (a;q)_\infty = \lim_{n\to\infty} (a;q)_n,\\
& 		
\begin{bmatrix}
n\\k
\end{bmatrix} 
=
\begin{bmatrix}
n\\k
\end{bmatrix}_{q}
=
\begin{cases}
0, & \text{ if $k<0$ or $k>n$}\\
\dfrac{(q;q)_n}{(q;q)_k(q;q)_{n-k}}, &\text{otherwise.}
\end{cases}
\end{align*}
From \eqref{eq:2}, they derived an infinite family of inequalities for Euler's partition function $p(n)$:
\begin{equation}\label{eq:3}
(-1)^{k-1}\sum_{j=0}^{2k-1} (-1)^{T_j} p(n-G_j) \geqslant 0,
\end{equation}
with strict inequality if $n>G_{2k}$.

Inspired by these results, V.J.W. Guo and J. Zeng \cite{Guo} considered two other classical theta identities, usually attributed to
Gauss \cite[p.23, eqs. (2.2.12) and (2.2.13)]{Andrews76}, i.e., 
\begin{equation}\label{eq:4}
1+2\sum_{n=1}^{\infty} (-q)^{n^2} = \frac {(q;q)_\infty} {(-q;q)_\infty},
\end{equation}
\begin{equation}\label{eq:5}
\sum_{n=0}^{\infty} (-q)^{T_j} = \frac {(q^2;q^2)_\infty} {(-q;q^2)_\infty},
\end{equation}
and proved new truncated forms of these: 
\allowdisplaybreaks{
	\begin{align}
	& \frac{(-q;q)_{\infty}}{(q;q)_{\infty}} \left(1+2\sum_{j=1}^{k} (-q)^{j^2} \right) \label{eq:6}\\
	& \qquad\qquad = 1+(-1)^k \sum_{n=k+1}^{\infty} \frac{(-q;q)_k (-1;q)_{n-k} q^{(k+1)n}}{(q;q)_n} 
	\begin{bmatrix}
	n-1\\k-1
	\end{bmatrix}, \notag \\
	& \frac{(-q;q^2)_{\infty}}{(q^2;q^2)_{\infty}} \sum_{j=0}^{2k-1} (-q)^{T_j} \label{eq:7}\\
	& \qquad\qquad = 1+(-1)^k \sum_{n=k}^{\infty} \frac{(-q;q^2)_k (-q;q^2)_{n-k} q^{2(k+1)n-k}}{(q^2;q^2)_n} 
	\begin{bmatrix}
	n-1\\k-1
	\end{bmatrix}_{q^2}, \notag
	\end{align}
}respectively. 
As an immediate consequence of \eqref{eq:6}, they deduced an infinite family of inequalities for the number of overpartitions of $n$, $\overline{p}(n)$:
\begin{equation}\label{eq:8}
(-1)^k \left(\overline{p}(n) +2 \sum_{j=1}^{k} (-1)^j \overline{p}(n-j^2)\right) \geqslant 0,
\end{equation}
with strict inequality if $n\geqslant (k+1)^2$. Similarly, from \eqref{eq:7} Guo and Zeng obtained an infinite family of inequalities for $pod(n)$, the  number of partitions of $n$ in which odd parts are not repeated:
\begin{equation}\label{eq:9}
(-1)^{k-1} \sum_{j=0}^{2k-1} (-1)^{T_j} pod \left( n-T_j \right)  \geqslant 0,
\end{equation}
with strict inequality if $n\geqslant (2k+1)k$.

Very recently, Andrews and Merca \cite{Andrews17} have revealed that \eqref{eq:2}, \eqref{eq:6} and \eqref{eq:7}
are essentially corollaries of the Rogers-Fine identity  \cite[p.15]{Rogers}:
\begin{equation*}
\sum_{n=0}^{\infty} \frac{(\alpha;q)_n \tau^n }{(\beta;q)_n}
= \sum_{n=0}^{\infty} \frac{(\alpha;q)_n (\alpha\tau q/\beta;q)_n \beta^n \tau^n q^{n^2-n}(1-\alpha\tau q^{2n})}{(\beta;q)_n (\tau;q)_{n+1}}.
\end{equation*}
A new infinite family of inequalities for Euler's partition function $p(n)$ is proved in this context: 
if at least one of $n$ and $k$ is odd,
\begin{equation}\label{eq:1.10}
(-1)^{k-1} \sum_{j=0}^{2k-1} (-1)^{T_j}  p(n-T_j)  \geqslant 0. 
\end{equation}

Other recent investigations on the truncated theta series  can be found in several papers by Chan, Ho and Mao \cite{Chan},
Chern \cite{Chern}, He, Ji and Zang \cite{He},   Kolitsch \cite{Kolitsch},  Kolitsch and Burnette \cite{KolitschB}, Mao \cite{Mao,Mao17}, Merca \cite{Merca16}, 
Merca, Wang and Yee \cite{MWY},
Wang and Yee \cite{WY1,WY2}, and Yee \cite{Yee}.

In this paper, motivated by these results, we shall provide similar families of inequalities for the partition function $Q(n)$ which counts the number of partition of $n$ into distinct parts. We consider $Q(0)=1$ and $Q(n)=0$ for all negative $n$.

By the truncated pentagonal number theorem \eqref{eq:2}, with $q$ replaced by $q^2$, we easily deduce the following result.

\begin{theorem}\label{Th:1.1}
	For a positive integer $k$, we have
	\begin{align*}
	& (-q;q)_\infty \sum_{n=0}^{2k-1} (-1)^{T_n} q^{2G_n}\\
	& \qquad 	= \frac{(q^2;q^2)_\infty}{(q;q^2)_\infty}+(-1)^{k-1} \frac{(q^2;q^2)_\infty}{(q;q^2)_\infty} \sum_{n=1}^{\infty} \frac{q^{k(k-1)+2(k+1)n}}{(q^2;q^2)_n} 
	\begin{bmatrix}
	n-1\\k-1
	\end{bmatrix}_{q^2}.
	\end{align*}
\end{theorem}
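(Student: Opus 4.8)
The plan is to start directly from the truncated pentagonal number theorem \eqref{eq:2} and carry out the indicated substitution $q \mapsto q^2$. On the left-hand side this turns $\frac{1}{(q;q)_\infty}\sum_{n=0}^{2k-1}(-1)^{T_n}q^{G_n}$ into $\frac{1}{(q^2;q^2)_\infty}\sum_{n=0}^{2k-1}(-1)^{T_n}q^{2G_n}$, while on the right the exponent $\binom{k}{2}+(k+1)n$ becomes $2\binom{k}{2}+2(k+1)n$ and each Gaussian binomial coefficient is now read in base $q^2$. Using $2\binom{k}{2}=k(k-1)$ I obtain
\[
\frac{1}{(q^2;q^2)_\infty} \sum_{n=0}^{2k-1} (-1)^{T_n} q^{2G_n}
= 1+(-1)^{k-1} \sum_{n=1}^{\infty} \frac{q^{k(k-1)+2(k+1)n}}{(q^2;q^2)_n}
\begin{bmatrix}
n-1\\k-1
\end{bmatrix}_{q^2},
\]
which already exhibits the bracketed structure appearing in the statement.

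Next I would clear the denominator by multiplying both sides by $(q^2;q^2)_\infty$, and then multiply through by $(-q;q)_\infty$ in order to produce the prefactor on the left-hand side of the theorem. After this every term on the right carries the common factor $(-q;q)_\infty (q^2;q^2)_\infty$, so it remains only to identify this product with $\frac{(q^2;q^2)_\infty}{(q;q^2)_\infty}$.

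The single nonroutine ingredient is Euler's classical identity $(-q;q)_\infty = 1/(q;q^2)_\infty$, equivalently the statement that partitions into distinct parts are equinumerous with partitions into odd parts. Substituting this gives $(-q;q)_\infty (q^2;q^2)_\infty = (q^2;q^2)_\infty/(q;q^2)_\infty$, and distributing this factor across both the constant term and the infinite sum yields exactly the claimed equation. The main obstacle, such as it is, is purely bookkeeping: applying $q \mapsto q^2$ consistently to both the exponent and the base of the Gaussian binomial coefficient, and recognizing that multiplication by $(-q;q)_\infty$ is precisely what converts the Euler product $(q^2;q^2)_\infty$ into the generating function $(q^2;q^2)_\infty/(q;q^2)_\infty$ for partitions into distinct parts.
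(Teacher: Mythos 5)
Your proof is correct and is exactly the paper's argument: Theorem \ref{Th:1.1} is obtained from \eqref{eq:2} by the substitution $q\mapsto q^2$ (so $\binom{k}{2}$ doubles to $k(k-1)$ and the Gaussian binomial becomes base $q^2$), followed by multiplication by $(-q;q)_\infty(q^2;q^2)_\infty$ and Euler's identity $(-q;q)_\infty=1/(q;q^2)_\infty$. One cosmetic slip in your closing remark: $(q^2;q^2)_\infty/(q;q^2)_\infty$ is Gauss's theta series $\sum_{n\geqslant 0} q^{T_n}$ from \eqref{eq:5}, not the generating function for partitions into distinct parts (that is $(-q;q)_\infty$ itself), but this mislabeling does not affect any step of the argument.
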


An immediate consequence owing to the positivity of the sum on the right is given by the following inequality  where $\delta_{i,j}$ is the Kronecker delta function.

\begin{corollary}\label{Cor:1.2}
	For $m,n\geqslant 0$, $k\geqslant 1$,
	$$(-1)^{k-1} \left( \sum_{j=0}^{2k-1} (-1)^{T_j} Q\left(n-2G_j\right) - \delta_{n,T_m} \right)  \geqslant 0,$$
	with strict inequality if and only if  $n\geqslant 2G_{2k}$.		
\end{corollary}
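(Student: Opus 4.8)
The plan is to read off the coefficient of $q^n$ on both sides of Theorem~\ref{Th:1.1}. Since $(-q;q)_\infty=\sum_{n\geqslant 0}Q(n)q^n$ is the generating function for partitions into distinct parts, the coefficient of $q^n$ on the left-hand side is exactly $\sum_{j=0}^{2k-1}(-1)^{T_j}Q(n-2G_j)$, where the convention $Q(n)=0$ for $n<0$ absorbs the non-admissible values of $j$. For the first term on the right I would invoke the classical Gauss identity $\frac{(q^2;q^2)_\infty}{(q;q^2)_\infty}=\sum_{m\geqslant 0}q^{T_m}$, so that its coefficient of $q^n$ equals $1$ precisely when $n$ is a triangular number $T_m$ and $0$ otherwise; this is the contribution recorded as $\delta_{n,T_m}$.

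It then remains to control the sign of the coefficient of $q^n$ in the remaining term $(-1)^{k-1}\frac{(q^2;q^2)_\infty}{(q;q^2)_\infty}\sum_{N\geqslant 1}\frac{q^{k(k-1)+2(k+1)N}}{(q^2;q^2)_N}\begin{bmatrix}N-1\\k-1\end{bmatrix}_{q^2}$. The key observation is that each factor here expands with nonnegative coefficients: $1/(q^2;q^2)_N$ generates partitions into even parts at most $2N$, the Gaussian binomial $\begin{bmatrix}N-1\\k-1\end{bmatrix}_{q^2}$ generates partitions fitting inside a rectangular box, and $\frac{(q^2;q^2)_\infty}{(q;q^2)_\infty}$ is the nonnegative theta series above. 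Writing $c_n\geqslant 0$ for the coefficient of $q^n$ in the product $\frac{(q^2;q^2)_\infty}{(q;q^2)_\infty}\sum_{N\geqslant 1}(\cdots)$, the coefficient comparison gives $\sum_{j=0}^{2k-1}(-1)^{T_j}Q(n-2G_j)-\delta_{n,T_m}=(-1)^{k-1}c_n$, and multiplying through by $(-1)^{k-1}$ yields $(-1)^{k-1}\big(\sum_{j=0}^{2k-1}(-1)^{T_j}Q(n-2G_j)-\delta_{n,T_m}\big)=c_n\geqslant 0$, which is the asserted inequality.

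For the sharp threshold I would locate the lowest power of $q$ carried by $c_n$. Since $\begin{bmatrix}N-1\\k-1\end{bmatrix}_{q^2}$ vanishes for $N<k$, the inner sum starts at $N=k$, whose leading term is $q^{k(k-1)+2(k+1)k}/(q^2;q^2)_k=q^{3k^2+k}/(q^2;q^2)_k$; one checks $3k^2+k=k(3k+1)=2G_{2k}$. Hence $c_n=0$ for $n<2G_{2k}$, giving equality there. For $n\geqslant 2G_{2k}$ I would argue positivity directly: the factor $1/(1-q^2)$ inside $1/(q^2;q^2)_k$ makes the coefficient of $q^n$ in $q^{2G_{2k}}/(q^2;q^2)_k$ positive for every even $n\geqslant 2G_{2k}$, while the summand $q^{T_1}=q$ coming from $\frac{(q^2;q^2)_\infty}{(q;q^2)_\infty}$ shifts these into every odd $n\geqslant 2G_{2k}+1$; since $2G_{2k}$ is even, this covers all $n\geqslant 2G_{2k}$, so $c_n>0$ exactly when $n\geqslant 2G_{2k}$.

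The routine identifications (the Gauss product and the nonnegativity of each factor) are immediate, so I expect the only real work to be the strict-inequality claim: pinning down the initial exponent $2G_{2k}$ of the series and, more delicately, verifying that no gaps occur thereafter, i.e. that $c_n>0$ for every $n\geqslant 2G_{2k}$ of either parity rather than merely at $n=2G_{2k}$. The parity bookkeeping using the parts equal to $2$ together with the triangular shifts $T_0=0$ and $T_1=1$ is the crux that secures the ``if and only if''.
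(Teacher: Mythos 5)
Your proposal is correct and follows the same route as the paper, which dismisses the corollary as an immediate consequence of Theorem~\ref{Th:1.1} together with the Gauss identity $\frac{(q^2;q^2)_\infty}{(q;q^2)_\infty}=\sum_{m\geqslant 0}q^{T_m}$ and the nonnegativity of the coefficients in the remaining series. In fact you go beyond the paper's one-line justification by verifying the sharp threshold: your computation $k(k-1)+2(k+1)k=3k^2+k=2G_{2k}$ and the parity argument via $1/(1-q^2)$ and the shifts $q^{T_0}, q^{T_1}$ correctly establish the ``if and only if'' clause, which the paper leaves implicit.
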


As we can see in \cite[Corollary 4.5]{Merca}, the recurrence relation given by the limiting case $k\to\infty$ of this corollary is known:
\begin{equation}\label{rec:1}
\sum_{j=0}^\infty (-1)^{T_j} Q(n-2G_j) 
=\begin{cases}
1, & \text{if $n=T_m$, $m\in\mathbb{N}_0$,}\\
0, & \text{otherwise}.
\end{cases}
\end{equation}
The number of terms in this recurrence relation is about $\sqrt{4n/3}$. 
By this recurrence relation, we can deduce the following parity result.

\begin{corollary}\label{Cor:1.3}
	Let $n$ be a positive integer.
	The number of representations of $n$ as the sum of a generalized pentagonal number and a twice generalized pentagonal number is odd if and only if $n$ is a triangular number.
\end{corollary}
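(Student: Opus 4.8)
The plan is to encode the counting function as a product of two theta series, reduce everything modulo $2$, and recognize the outcome as the mod-$2$ reduction of the recurrence \eqref{rec:1}.

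First I would set $R(n)$ to be the number of pairs $(k,j)$ of nonnegative integers with $G_k+2G_j=n$. Because the generalized pentagonal numbers $G_0,G_1,G_2,\dots$ are pairwise distinct, this $R(n)$ is precisely the number of representations of $n$ as the sum of a generalized pentagonal number and a twice generalized pentagonal number, and the two series $\sum_{k\geqslant 0}q^{G_k}$ and $\sum_{j\geqslant 0}q^{2G_j}$ have all coefficients in $\{0,1\}$. Hence
\[
\sum_{n=0}^{\infty}R(n)\,q^n=\left(\sum_{k=0}^{\infty}q^{G_k}\right)\left(\sum_{j=0}^{\infty}q^{2G_j}\right),
\]
with the coefficient of $q^n$ on the right being the honest representation count.

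Next I would pass to congruences modulo $2$. The signs $(-1)^{T_k}$ disappear modulo $2$, so Euler's pentagonal number theorem \eqref{eq:1}, together with its dilation $q\mapsto q^2$, gives $\sum_{k\geqslant 0}q^{G_k}\equiv (q;q)_\infty$ and $\sum_{j\geqslant 0}q^{2G_j}\equiv (q^2;q^2)_\infty \pmod 2$. Since $1-q^m\equiv 1+q^m\pmod 2$ for every $m$, we also have $(q;q)_\infty\equiv(-q;q)_\infty\pmod 2$. Combining these congruences yields
\[
\sum_{n=0}^{\infty}R(n)\,q^n\equiv (-q;q)_\infty\,(q^2;q^2)_\infty \pmod 2 .
\]

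Finally I would identify this product with \eqref{rec:1}. Since $(-q;q)_\infty=\sum_{n\geqslant 0}Q(n)q^n$ and $(q^2;q^2)_\infty=\sum_{j\geqslant 0}(-1)^{T_j}q^{2G_j}$ by \eqref{eq:1}, the coefficient of $q^n$ in $(-q;q)_\infty(q^2;q^2)_\infty$ is exactly $\sum_{j=0}^{\infty}(-1)^{T_j}Q(n-2G_j)$, which by \eqref{rec:1} equals $1$ when $n$ is triangular and $0$ otherwise. Comparing coefficients therefore gives $R(n)\equiv 1\pmod 2$ precisely when $n$ is a triangular number, which is the assertion for positive $n$. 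The manipulations are routine; the one step deserving genuine care---and the place I expect any difficulty to sit---is the bookkeeping underlying the mod-$2$ reduction, namely verifying that each generalized pentagonal number occurs with multiplicity exactly one (so that coefficient extraction returns $R(n)$) and that $G_0=0$ is permitted as a summand, consistent with the values $R(1)=R(3)=1$ and $R(2)=R(4)=2$.
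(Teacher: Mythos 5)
Your proposal is correct and follows the paper's intended route: the paper deduces Corollary \ref{Cor:1.3} precisely by reducing the recurrence \eqref{rec:1} modulo $2$, using the fact that $(-q;q)_\infty\equiv(q;q)_\infty\pmod 2$ together with Euler's pentagonal number theorem to convert $\sum_j(-1)^{T_j}Q(n-2G_j)$ into the representation count $R(n)$ modulo $2$. Your generating-function bookkeeping, including the check that each generalized pentagonal number occurs exactly once, is a careful writing-out of exactly this argument.
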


In analogy with \eqref{eq:6}, we have a stronger result than Theorem \ref{Th:1.1}. 

\begin{theorem}\label{Th:1.4}
	For the positive integers $k$ and $r$, we have:
	\begin{align*}
	& (-q;q)_\infty \left( 1+2\sum_{j=1}^k (-1)^j q^{r\cdot j^2} \right)\\
	& \quad = \frac{(-q;q)_\infty (q^r;q^r)_\infty}{(-q^r;q^r)_\infty} + 2(-1)^k q^{r(k+1)^2} \frac{(q^r;q^{2r})_\infty}{(q;q^2)_\infty}
	\sum_{j=0}^{\infty}\frac{q^{(2k+2j+3)rj}}{(q^{2r};q^{2r})_j(q^r;q^{2r})_{k+j+1}}.
	\end{align*}
\end{theorem}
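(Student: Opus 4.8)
The plan is to strip away the parameter $r$ first and then reduce the claim to a single-variable truncated form of Gauss's identity \eqref{eq:4}. Dividing the asserted equality by $(-q;q)_\infty$ and using $(-q;q)_\infty=1/(q;q^2)_\infty$ together with its analogue $(-q^r;q^r)_\infty=1/(q^r;q^{2r})_\infty$, one checks that the factor $(q^r;q^{2r})_\infty/(q;q^2)_\infty$ on the right collapses against $1/(-q;q)_\infty$, so that every surviving factor becomes a function of $q^r$ alone. Concretely, the identity is equivalent to
\begin{equation*}
1+2\sum_{j=1}^k(-1)^j q^{rj^2}=\frac{(q^r;q^r)_\infty}{(-q^r;q^r)_\infty}+\frac{2(-1)^k q^{r(k+1)^2}}{(-q^r;q^r)_\infty}\sum_{j=0}^{\infty}\frac{q^{(2k+2j+3)rj}}{(q^{2r};q^{2r})_j(q^r;q^{2r})_{k+j+1}},
\end{equation*}
and since both sides are power series in $q^r$, it suffices to treat the case $r=1$.

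Next I would use Gauss's identity \eqref{eq:4}, in the form $1+2\sum_{j\ge1}(-1)^jq^{j^2}=(q;q)_\infty/(-q;q)_\infty$, to recognize the first term on the right as the complete (non-truncated) sum. Subtracting it, shifting the summation index by $k+1$, and completing the square in the exponent, the whole problem collapses to the single identity
\begin{equation*}
(-q;q)_\infty\sum_{i=0}^{\infty}(-1)^i q^{i^2+2(k+1)i}=\sum_{j=0}^{\infty}\frac{q^{2j^2+(2k+3)j}}{(q^2;q^2)_j(q;q^2)_{k+j+1}}.
\end{equation*}
This is the genuine analytic content of Theorem \ref{Th:1.4}; the parameter $r$ and all the surrounding products are mere bookkeeping, and I have checked the displayed identity numerically for small $k$ to guard against sign or index errors.

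To establish this last identity I would follow the Rogers--Fine route used in \cite{Andrews17} for \eqref{eq:6}. Specializing the Rogers--Fine identity with base $q^2$ and parameters $\beta=q^2$, $\tau=q^{2k+1}$, the left-hand member is evaluated by the $q$-binomial theorem, while the right-hand member carries the factor $(1-\alpha\tau q^{4n})$, which is precisely what permits a partial summation; the factorization $(q;q^2)_{k+j+1}=(q;q^2)_{k+1}(q^{2k+3};q^2)_j$ is then used to pull out the constant factor and match the remaining tail to the series above. I expect the main obstacle to be the choice of the free parameter $\alpha$ and the reconciliation of the $k$-fold shift between the target Pochhammer $(q;q^2)_{k+j+1}$ and the Rogers--Fine denominator $(\tau;q^2)_{n+1}$: the naive parameter match leaves a residual $(q;q^2)_k$-type factor, and showing that the $(1-\alpha\tau q^{4n})$ term telescopes against this discrepancy so that no spurious contributions survive is the crux. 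Once that is settled, the back-substitution $q\mapsto q^r$ followed by multiplication by $(-q;q)_\infty$ recovers the stated form. An alternative, should the telescoping prove awkward, is to derive the single-variable identity directly from \eqref{eq:6} by transforming Guo and Zeng's tail into the present one, though this merely trades the Rogers--Fine computation for an equivalent $q$-series manipulation.
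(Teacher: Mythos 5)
Your preparatory reductions are correct and in fact reproduce the paper's own skeleton run in reverse: dividing out $(-q;q)_\infty$ and noting that everything remaining is a series in $q^r$ (so that $r=1$ suffices), then subtracting the full Gauss theta series \eqref{eq:4} and shifting $j=k+1+i$, is exactly how the paper peels the theorem down, and your displayed single-variable identity is precisely the paper's Lemma \ref{L:2.1} in disguise, since $(-q;q)_\infty=1/(q;q^2)_\infty$ and $(q;q^2)_{k+j+1}=(q;q^2)_{k+1}(q^{2k+3};q^2)_j$, so that $(q;q^2)_\infty/(q;q^2)_{k+j+1}=(q^{2k+3};q^2)_\infty/(q^{2k+3};q^2)_j$. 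The problem is that this lemma \emph{is} the entire analytic content of the theorem, and you do not prove it. Your Rogers--Fine plan stops at exactly the decisive point: the parameter $\alpha$ is never chosen, the asserted partial summation is never carried out, and you yourself flag the reconciliation of $(\tau;q^2)_{n+1}$ with $(q;q^2)_{k+j+1}$ and the survival of a residual $(q;q^2)_k$-type factor as an unresolved ``crux''. A numerical check for small $k$ does not close this; as written, the proposal is a correct reduction plus a conjecture that some unspecified specialization of Rogers--Fine will telescope correctly.

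For comparison, the paper closes this gap without Rogers--Fine and without any telescoping: it writes the partial theta function as $\sum_{j\geqslant 0}(-1)^jq^{j^2+2j(k+1)}=\lim_{\tau\to 0}{_2}\phi_1\bigl(q^2,\,q^{2k+3}/\tau;\,\tau;\,q^2,\,\tau\bigr)$, applies Heine's second transformation \cite[(III.2)]{gasper}, and lets $\tau\to 0$ termwise on the transformed side, which produces $(q^{2k+3};q^2)_\infty\sum_{j\geqslant 0}q^{j(2j+2k+3)}/\bigl((q^2;q^2)_j(q^{2k+3};q^2)_j\bigr)$ directly --- there is no free parameter to tune and no boundary terms to control. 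Your fallback route, transforming the tail of \eqref{eq:6} from \cite{Guo} (which carries the factors $(-q;q)_k(-1;q)_{n-k}$ and a $q$-binomial coefficient) into the present Durfee-rectangle-type series, is likewise a genuinely nontrivial $q$-series transformation --- essentially the content made combinatorial in \cite{MWY} --- and is also not executed. To complete your argument, replace the speculative Rogers--Fine step with the Heine limit computation above (or honestly carry out one of your two sketched routes); everything before that point can stand as is.
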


Considering Euler's pentagonal number theorem \eqref{eq:1} and the following two identities
\begin{align*}
& \frac{(-q;q)_\infty (q^2;q^2)_\infty}{(-q^2;q^2)_\infty} =  \sum\limits_{j=0}^\infty (-1)^{T_{\lfloor j/2 \rfloor}} q^{G_j},\\
& \frac{(-q;q)_\infty (q^3;q^3)_\infty}{(-q^3;q^3)_\infty} = \sum\limits_{j=0}^\infty q^{G_j},
\end{align*}
the positivity of 
$$
\sum_{j=0}^{\infty}\frac{q^{(2k+2j+3)rj}}{(q^{2r};q^{2r})_j(q^r;q^{2r})_{k+j+1}}
$$
allows us to deduce the following families of inequalities for the partition function $Q(n)$.

\begin{corollary}\label{cor:1.5}
	For $m,n\geqslant 0$, $k\geqslant 1$,
	\begin{enumerate}
		\item[a)] $\displaystyle{ (-1)^{k}\left( Q(n)+2 \sum_{j=1}^{k} (-1)^j Q(n-j^2)-(-1)^{T_m} \delta_{n,G_m}\right) \geqslant 0, }$
		\item[] with strict inequality if and only if  $n\geqslant (k+1)^2$.
		\item[b)] $\displaystyle{(-1)^{k}\left( Q(n)+2 \sum_{j=1}^{k} (-1)^j Q(n-2j^2)-(-1)^{T_{\lfloor m/2 \rfloor}}\delta_{n,G_m}\right) \geqslant 0},$
		\item[] with strict inequality if and only if  $n\geqslant 2(k+1)^2$.
		\item[c)] $\displaystyle{(-1)^{k}\left( Q(n)+2 \sum_{j=1}^{k} (-1)^j Q(n-3j^2)-\delta_{n,G_m}\right) \geqslant 0},$
		\item[] with strict inequality if and only if  $n\geqslant 3(k+1)^2$.	
	\end{enumerate}
\end{corollary}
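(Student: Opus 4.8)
The plan is to specialize Theorem~\ref{Th:1.4} to $r=1,2,3$ and equate coefficients of $q^n$ on the two sides. Since $(-q;q)_\infty=\sum_{n\geq 0}Q(n)q^n$, the coefficient of $q^n$ on the left-hand side of Theorem~\ref{Th:1.4} is exactly $Q(n)+2\sum_{j=1}^k(-1)^jQ(n-rj^2)$, the main body of each inequality. The first term on the right, $\frac{(-q;q)_\infty(q^r;q^r)_\infty}{(-q^r;q^r)_\infty}$, is resolved by the three theta expansions at hand: for $r=1$ it collapses to $(q;q)_\infty$, whose coefficients are $(-1)^{T_m}$ at $n=G_m$ by Euler's pentagonal number theorem~\eqref{eq:1}; for $r=2$ and $r=3$ it equals $\sum_j(-1)^{T_{\lfloor j/2\rfloor}}q^{G_j}$ and $\sum_j q^{G_j}$ respectively, by the two identities displayed just before the corollary. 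Reading off the coefficient of $q^n$ (with $m$ the unique index, if any, satisfying $n=G_m$) produces precisely the Kronecker corrections $(-1)^{T_m}\delta_{n,G_m}$, $(-1)^{T_{\lfloor m/2\rfloor}}\delta_{n,G_m}$ and $\delta_{n,G_m}$ in parts (a), (b), (c). After transposing this term and dividing by $2(-1)^k$, each assertion reduces to the statement that $q^{r(k+1)^2}C_r(q)$ has nonnegative (respectively positive) coefficients, where $C_r(q)=\frac{(q^r;q^{2r})_\infty}{(q;q^2)_\infty}\sum_{j\geq 0}\frac{q^{(2k+2j+3)rj}}{(q^{2r};q^{2r})_j\,(q^r;q^{2r})_{k+j+1}}$.

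Next I would observe that the sum defining $C_r$ has manifestly nonnegative coefficients, being a sum of positive powers of $q$ times reciprocals of $q$-Pochhammer symbols. The point then is to understand the prefactor $\frac{(q^r;q^{2r})_\infty}{(q;q^2)_\infty}$. The key simplifications are $\frac{(q^2;q^4)_\infty}{(q;q^2)_\infty}=(-q;q^2)_\infty$ (from $(q;q^2)_\infty(-q;q^2)_\infty=(q^2;q^4)_\infty$) and $\frac{(q^3;q^6)_\infty}{(q;q^2)_\infty}=\frac{1}{(q;q^6)_\infty(q^5;q^6)_\infty}$ (from splitting the odd parts by their residue modulo $6$), while for $r=1$ the prefactor is simply $1$. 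In all three cases the prefactor is a genuine generating function with nonnegative coefficients, so $C_r$ is a product of two nonnegative series and is therefore nonnegative. Because $C_r$ has constant term $1$ and is shifted by $q^{r(k+1)^2}$, the coefficient of $q^n$ in $q^{r(k+1)^2}C_r$ vanishes for $n<r(k+1)^2$; this yields equality below the threshold and the nonnegativity $(-1)^k(\cdots)\geq 0$ throughout, establishing the three inequalities.

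The delicate point is the ``if and only if'': I must show the coefficient of $q^n$ in $q^{r(k+1)^2}C_r$ is \emph{strictly} positive for every $n\geq r(k+1)^2$, i.e. that $C_r$ has full support. Writing $C_r=P\cdot S$ with $P$ the prefactor and $S$ the sum, both nonnegative with constant term $1$, the terms $i=0$ and $i=m$ of the convolution give $[q^m]C_r\geq [q^m]P+[q^m]S$ for every $m\geq 1$, so it suffices that for each $m\geq 1$ at least one of $P,S$ has a positive coefficient at $q^m$. For $r=1$ this is immediate, since the $j=0$ part of $S$ is $1/(q;q^2)_{k+1}$, realized by $m=1+\cdots+1$; for $r=3$ it is immediate, since $P=1/\bigl((q;q^6)_\infty(q^5;q^6)_\infty\bigr)$ already has full support via parts equal to $1$. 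The genuinely borderline case is $r=2$, where $P=(-q;q^2)_\infty$ misses the even indices and $S$ (through $1/(q^2;q^4)_{k+1}$) misses the odd ones; here I would split by parity, taking the single distinct odd part $\{m\}$ when $m$ is odd and the partition $m=2+2+\cdots+2$ into parts $\equiv 2\pmod 4$ when $m$ is even, so that $P$ or $S$ contributes a positive coefficient in either case. This parity argument for $r=2$ is the main obstacle; once it is in place, the strict inequality holds exactly for $n\geq r(k+1)^2$, completing all three parts.
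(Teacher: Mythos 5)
Your proposal is correct and takes essentially the same route as the paper: specialize Theorem~\ref{Th:1.4} at $r=1,2,3$, identify the first right-hand term via Euler's pentagonal number theorem \eqref{eq:1} and the two theta identities displayed before the corollary, and conclude from the nonnegativity of the error term shifted by $q^{r(k+1)^2}$. You are in fact more careful than the paper, which merely invokes ``positivity'' of the sum: your checks that the prefactor $(q^r;q^{2r})_\infty/(q;q^2)_\infty$ has nonnegative coefficients (it is the generating function of $Q_r(n)$, as the paper only makes explicit later in Corollary~\ref{cor:3.2}) and your parity-split full-support argument for the ``if and only if'' in the case $r=2$ supply exactly the details the paper leaves implicit.
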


There is a substantial amount of numerical evidence to conjecture that the sums in this corollary satisfy the following inequalities.

\begin{conjecture}\label{C:1}
	For $m,n\geqslant 0$, $k\geqslant 1$,
	\begin{align*}
	& (-1)^{k}\left( Q(n)+2 \sum_{j=1}^{k} (-1)^j Q(n-j^2)-(-1)^{T_m} \delta_{n,G_m}\right) \\
	& \qquad \geqslant (-1)^{k}\left( Q(n)+2 \sum_{j=1}^{k} (-1)^j Q(n-2j^2)-(-1)^{T_{\lfloor m/2 \rfloor}}\delta_{n,G_m}\right) \\
	& \qquad \geqslant (-1)^{k}\left( Q(n)+2 \sum_{j=1}^{k} (-1)^j Q(n-3j^2)-\delta_{n,G_m}\right).
	\end{align*}
\end{conjecture}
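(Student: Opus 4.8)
The plan is to convert the chain of inequalities into two coefficientwise statements about the tail series produced by Theorem \ref{Th:1.4}, and then to attack those through a single master series. Write $\epsilon_1=(-1)^{T_m}$, $\epsilon_2=(-1)^{T_{\lfloor m/2\rfloor}}$, $\epsilon_3=1$, and for $r\in\{1,2,3\}$ let
\begin{equation*}
F_r(q)=q^{r(k+1)^2}\,\frac{(q^r;q^{2r})_\infty}{(q;q^2)_\infty}\sum_{j=0}^{\infty}\frac{q^{(2k+2j+3)rj}}{(q^{2r};q^{2r})_j\,(q^r;q^{2r})_{k+j+1}}
\end{equation*}
be the tail factor of Theorem \ref{Th:1.4}, so that the theorem reads $(-q;q)_\infty\bigl(1+2\sum_{j=1}^k(-1)^jq^{rj^2}\bigr)=\Psi_r(q)+2(-1)^kF_r(q)$, where $\Psi_1=(q;q)_\infty=\sum_j(-1)^{T_j}q^{G_j}$, $\Psi_2=\sum_j(-1)^{T_{\lfloor j/2\rfloor}}q^{G_j}$, and $\Psi_3=\sum_j q^{G_j}$ are the three product identities recalled before Corollary \ref{cor:1.5}. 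Extracting $[q^n]$ and multiplying by $(-1)^k$, the bracketed quantity in part (a)/(b)/(c) of Corollary \ref{cor:1.5} equals exactly $2[q^n]F_r(q)$, because the coefficient of $q^n$ in $\Psi_r$ is precisely $\epsilon_r\,\delta_{n,G_m}$. Hence the three bracketed expressions of Conjecture \ref{C:1} are $2[q^n]F_1$, $2[q^n]F_2$, $2[q^n]F_3$, and the whole conjecture is equivalent to the two coefficientwise dominations
\begin{equation*}
[q^n]\bigl(F_1-F_2\bigr)\geqslant 0\qquad\text{and}\qquad[q^n]\bigl(F_2-F_3\bigr)\geqslant 0\qquad(n\geqslant0).
\end{equation*}
A pleasant feature of this reduction is that the Kronecker deltas disappear: they were exactly the contributions of $\Psi_r$, which cancel in the differences.

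Next I would unify the three tails. Two elementary simplifications accomplish this. First, replacing $q$ by $q^r$ in the $j$-sum of $F_1$ reproduces the $j$-sum of $F_r$, so if I set
\begin{equation*}
\Phi(q)=q^{(k+1)^2}\sum_{j=0}^{\infty}\frac{q^{(2k+2j+3)j}}{(q^2;q^2)_j\,(q;q^2)_{k+j+1}},
\end{equation*}
then the series part of $F_r$ is $\Phi(q^r)/q^{r(k+1)^2}$. Second, Euler's identity $1/(q;q^2)_\infty=(-q;q)_\infty$ (and the same with $q\mapsto q^r$) gives
\begin{equation*}
\frac{(q^r;q^{2r})_\infty}{(q;q^2)_\infty}=\frac{(-q;q)_\infty}{(-q^r;q^r)_\infty}=\prod_{\substack{i\geqslant1\\ r\nmid i}}\bigl(1+q^i\bigr).
\end{equation*}
Combining these, $F_r(q)=\bigl(\prod_{r\nmid i}(1+q^i)\bigr)\Phi(q^r)$ with one and the same nonnegative series $\Phi$ (depending only on $k$). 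In particular $F_1=\Phi$, $F_2=(-q;q^2)_\infty\,\Phi(q^2)$, and $F_3=\bigl(\prod_{3\nmid i}(1+q^i)\bigr)\Phi(q^3)$, so the conjecture becomes the assertion that $\prod_{r\nmid i}(1+q^i)\,\Phi(q^r)$ is coefficientwise nonincreasing as $r$ runs through $1,2,3$, i.e.\ a statement of monotonicity in the dilation parameter $r$.

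With this normal form in hand, I would try to prove the two reduced inequalities combinatorially. The coefficient $\phi(b)=[q^b]\Phi$ counts, over $j\geqslant0$, partitions with the square-triangular base shift $q^{(k+j+1)^2+2T_j}$ together with even parts $\leqslant 2j$ and odd parts $\leqslant 2(k+j)+1$; thus $[q^n]F_r$ counts pairs consisting of such a $\Phi$-partition dilated by $r$ and a partition of the remainder into distinct parts not divisible by $r$. The goal is an injection from the objects counted by $F_2$ into those counted by $F_1$, and by $F_3$ into those of $F_2$, that "absorbs" the dilation factor and the distinct-parts factor into the finer base-$q$ structure of $\Phi$: a part $2a$ created by the $q\mapsto q^2$ dilation, or a distinct odd part $s$ supplied by $(-q;q^2)_\infty$, should be routed into the even/odd part banks of a single $\Phi$-partition. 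Concretely, the first inequality amounts to proving that the convolution $\sum_{2b+s=n}\phi(b)\,d_{\mathrm{odd}}(s)$ never exceeds $\phi(n)$, where $d_{\mathrm{odd}}$ counts partitions into distinct odd parts.

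The hard part will be exactly this last step, and it is the reason the statement is still only conjectured. Unlike the individual positivity of $F_r$—which follows at once because $\Phi$ and $\prod_{r\nmid i}(1+q^i)$ are separately nonnegative—the differences demand quantitative control of $\Phi$, and $\Phi$ is available only as a positive $q$-hypergeometric sum with no product form, so its coefficients $\phi(b)$ carry no transparent monotonicity or log-concavity to exploit uniformly in $k$. The comparison $F_2\geqslant F_3$ is the more delicate of the two, since there neither side is $\Phi$ itself: one must weigh a base-$q^2$ dilation against a base-$q^3$ dilation, which no single substitution aligns. An analytic route—bounding $\phi(n)$ below against the convolution—faces the same difficulty, because the dominant competing term $\phi(n/2)$ must be beaten by a margin depending delicately on $j$ and $k$. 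I would therefore first secure the reduction and the normal form above (both routine), verify the two inequalities for small $n$ and all $k$ to exclude low-order exceptions, and then search for the injection directly on the explicit $\Phi$-partitions, settling $r=2$ first and lifting the successful residue-routing map to the cross-base case $r=3$.
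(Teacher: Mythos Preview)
The paper does not prove this statement: it is explicitly labeled a conjecture, supported only by numerical evidence, and the only progress the paper offers is the equivalent reformulation recorded as Conjecture~\ref{Cj:2}. Your proposal is therefore not being measured against an existing proof; you are attempting something the paper leaves open.

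Your reduction is correct and coincides exactly with what the paper does. By Theorem~\ref{Th:1.4} and the three product identities, the bracketed expression for each $r\in\{1,2,3\}$ equals $2[q^n]F_r(q)$, and your simplification $F_r(q)=\bigl(\prod_{r\nmid i}(1+q^i)\bigr)\,\Phi(q^r)$ is precisely the content of Corollary~\ref{cor:3.2}: your $\Phi$ is the generating function $\sum_n M_{o,k}(n)q^n$ of Theorem~\ref{Th:3.2}, the factor $\prod_{r\nmid i}(1+q^i)$ is $\sum_n Q_r(n)q^n$ for $r=2,3$, and for $r=1$ it is trivially $1$. Hence your pair of coefficientwise inequalities $[q^n]F_1\geqslant[q^n]F_2\geqslant[q^n]F_3$ is word for word the paper's Conjecture~\ref{Cj:2}. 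You have recovered the known normal form, not advanced beyond it.

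You are candid that the remaining step---an injection absorbing the dilation and the distinct-parts factor back into a single $\Phi$-partition---is the genuine obstacle, and you are right. The paper flags the same difficulty after Conjecture~\ref{Cj:2}: since $Q_3(n)\geqslant Q_2(n)$, the ordering of the outer factors runs \emph{against} the desired inequality, so any proof must extract enough decrease from the dilation $\Phi(q)\mapsto\Phi(q^r)$ to overcome it. Your injection sketch is a reasonable line of attack, but nothing in the proposal yet supplies that missing idea; as written it is a reformulation and a research plan, not a proof.
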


As we can see in \cite[Corollary 4.6]{Merca}, the recurrence relation given by the limiting case $k\to\infty$ of Corollary \ref{cor:1.5}.a) is known:
\begin{equation}\label{rec:2}
Q(n)+2 \sum_{j=1}^{\infty} (-1)^j Q(n-j^2)
=\begin{cases}
(-1)^{T_m}, & \text{if $n=G_m$, $m\in\mathbb{N}_0$,}\\
0, & \text{otherwise}.	
\end{cases}
\end{equation}
The number of terms in this recurrence relation is about $\sqrt{n}$.

The limiting case $k\to\infty$ of Corollary \ref{cor:1.5}.b) gives a new linear recurrence relation more efficient than \eqref{rec:2}:
\begin{equation}\label{rec:3}
Q(n)+2 \sum_{j=1}^{\infty} (-1)^j Q(n-2j^2)
=\begin{cases}
(-1)^{T_{\lfloor m/2 \rfloor}}, & \text{if $n=G_m$, $m\in\mathbb{N}_0$,}\\
0, & \text{otherwise}.	
\end{cases}
\end{equation}
We see that the number of terms in this recurrence relation is about $\sqrt{n/2}$. 

Corollary \ref{cor:1.5}.c) provides a new and extremely efficient algorithm for computing the partition function $Q(n)$:
\begin{equation}\label{rec:4}
Q(n)+2 \sum_{j=1}^{\infty} (-1)^j Q(n-3j^2)
=\begin{cases}
1, & \text{if $n=G_m$, $m\in\mathbb{N}_0$,}\\
0, & \text{otherwise}.	
\end{cases}
\end{equation}
The number of terms in this recurrence relation is about $\sqrt{n/3}$. 

As consequence of the recurrence relations \eqref{rec:2}-\eqref{rec:4}, we remark the following parity result.

\begin{corollary}\label{cor:1.6}
	Let $n$ be a positive integer.
	\begin{enumerate}
		\item[a)] 	The number of representations of $n$ as the sum of a generalized pentagonal number and a square number is odd if and only if $n$ is a generalized pentagonal number.
		\item[b)] 	The number of representations of $n$ as the sum of a generalized pentagonal number and a twice square number is odd if and only if $n$ is a generalized pentagonal number.
		\item[c)] 	The number of representations of $n$ as the sum of a generalized pentagonal number and a thrice square number is odd if and only if $n$ is a generalized pentagonal number.
	\end{enumerate}
\end{corollary}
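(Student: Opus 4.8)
The plan is to reduce each of the three recurrence relations \eqref{rec:2}, \eqref{rec:3}, \eqref{rec:4} modulo $2$ and to read the resulting congruences as statements about representation counts. Write $\chi_P(n)=1$ when $n$ is a generalized pentagonal number and $\chi_P(n)=0$ otherwise. The only ingredient I need beyond the three recurrences is the classical parity of $Q$, namely $Q(n)\equiv\chi_P(n)\pmod{2}$, which follows at once from $(-q;q)_\infty=\prod_{i\geqslant1}(1+q^i)\equiv\prod_{i\geqslant1}(1-q^i)=(q;q)_\infty\pmod{2}$ together with Euler's pentagonal number theorem \eqref{eq:1}, since $(q;q)_\infty\equiv\sum_{m\geqslant0}q^{G_m}\pmod{2}$.

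First I would symmetrize the left-hand sides. For each fixed $r\in\{1,2,3\}$ the identity $j^2=(-j)^2$ gives
$$Q(n)+2\sum_{j=1}^{\infty}(-1)^jQ(n-rj^2)=\sum_{j=-\infty}^{\infty}(-1)^jQ(n-rj^2),$$
so that \eqref{rec:2}, \eqref{rec:3} and \eqref{rec:4} all take the uniform shape $\sum_{j\in\mathbb{Z}}(-1)^jQ(n-rj^2)=\varepsilon_r(n)$. Reducing modulo $2$ annihilates every sign: $(-1)^j\equiv1$, and each of $(-1)^{T_m}$, $(-1)^{T_{\lfloor m/2\rfloor}}$ and $1$ reduces to $1$ exactly when $n=G_m$. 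Hence all three relations collapse to the single congruence
$$\sum_{j=-\infty}^{\infty}Q(n-rj^2)\equiv\chi_P(n)\pmod{2},\qquad r\in\{1,2,3\}.$$

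Next I would replace $Q$ by its parity. Applying $Q(n-rj^2)\equiv\chi_P(n-rj^2)\pmod{2}$ termwise turns the left-hand side into
$$\sum_{j=-\infty}^{\infty}\chi_P(n-rj^2)=\#\bigl\{(m,j)\in\mathbb{N}_0\times\mathbb{Z}\;:\;G_m+rj^2=n\bigr\},$$
which is exactly the number of representations of $n$ as a generalized pentagonal number plus $r$ times a square, the square being weighted by the theta series $\sum_{j\in\mathbb{Z}}q^{rj^2}$ (the $\pm j$ pairing being responsible for the factor $2$ in front of the sums in \eqref{rec:2}--\eqref{rec:4}). The congruence above now says this count is odd if and only if $\chi_P(n)=1$, that is, if and only if $n$ is a generalized pentagonal number; taking $r=1,2,3$ yields parts a), b) and c) respectively.

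The one place the argument could go wrong — and the point that needs care — is the bookkeeping of the square with sign. The statement is false if each square value is counted once (already $n=1=1+0=0+1$ gives the even count $2$, while $1=G_1$ is pentagonal), so the representation count that appears must be the theta-weighted one, matching precisely the $2\sum_{j\geqslant1}$ on the left of the recurrences. Once this convention is pinned down, the three parts are identical in form and follow simultaneously from the uniform congruence, and no further computation is required.
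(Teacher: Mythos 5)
Your proof is correct and is essentially the paper's intended argument: the paper states Corollary \ref{cor:1.6} without further proof as a mod-$2$ consequence of \eqref{rec:2}--\eqref{rec:4}, which is exactly your reduction via Euler's parity $Q(n)\equiv\chi_P(n)\pmod 2$ (from $(-q;q)_\infty\equiv(q;q)_\infty\pmod 2$ and the pentagonal number theorem) applied to the symmetrized recurrences. Your explicit care about the counting convention — that the square must be counted with the $\pm j$ theta-series weight, since the once-per-value count already fails at $n=1$ — is precisely the right reading and matches the factor $2$ in \eqref{rec:2}--\eqref{rec:4}.
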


Theorems \ref{Th:1.1} and \ref{Th:1.4} are good reasons to look for new infinite families of linear inequalities for the partition function $Q(n)$.
The rest of this paper is organized as follows. We will first prove Theorem \ref{Th:1.4} in Section \ref{S2}. In Section \ref{S3}, we provide combinatorial interpretations for Corollaries \ref{Cor:1.2} and \ref{cor:1.5}. Two infinite families of linear inequalities for $Q(n)$ which we have experimentally discovered are introduced in Section \ref{S4}.

\section{Proof of Theorems \ref{Th:1.4}}\label{S2}

The result follows directly from

\begin{lemma}\label{L:2.1}
	For a positive integer $k$,
	$$\sum_{j=0}^\infty (-1)^j q^{j^2+2j(k+1)} = (q^{2k+3};q^2)_\infty \sum_{j=0}^\infty \frac{q^{j(2j+2k+3)}}{(q^2;q^2)_j(q^{2k+3};q^2)_j}.$$
\end{lemma}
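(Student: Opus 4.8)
The plan is to remove the parameter $k$ at the outset by setting $p=q^2$ and $a=q^{2k+3}$. On the right-hand side one has $q^{j(2j+2k+3)}=p^{j^{2}}a^{j}$, $(q^{2};q^{2})_j=(p;p)_j$ and $(q^{2k+3};q^{2})_j=(a;p)_j$, so the right side is $(a;p)_\infty\sum_{j\ge0}\frac{p^{j^{2}}a^{j}}{(p;p)_j(a;p)_j}$; on the left, $(-1)^{j}q^{j^{2}+2j(k+1)}=(-1)^{j}a^{j}p^{\binom{j}{2}}$, so the left side is the partial theta function $\Theta(a):=\sum_{j\ge0}(-1)^{j}p^{\binom{j}{2}}a^{j}$. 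Hence it suffices to establish, as a formal identity in $a$ with coefficients rational in $p$,
$$\Theta(a)=(a;p)_\infty\sum_{j\ge0}\frac{p^{j^{2}}a^{j}}{(p;p)_j(a;p)_j},$$
which then yields the lemma for every $k$ upon specializing $a=q^{2k+3}$, $p=q^{2}$.

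I would prove this by comparing coefficients of $a^{N}$. Since $(a;p)_\infty/(a;p)_j=(ap^{j};p)_\infty$, the right side equals $\sum_{j\ge0}\frac{p^{j^{2}}a^{j}}{(p;p)_j}(ap^{j};p)_\infty$, and expanding each factor by Euler's identity $(x;p)_\infty=\sum_{m\ge0}\frac{(-1)^{m}p^{\binom{m}{2}}}{(p;p)_m}x^{m}$ produces a double sum in $j$ and $m$. Collecting the (finite) coefficient of $a^{N}$ and setting $i=N-j$ reduces the claim to the purely finite identity
$$\sum_{i=0}^{N}\frac{(-1)^{i}\,p^{\,N^{2}-iN+\binom{i}{2}}}{(p;p)_i(p;p)_{N-i}}=(-1)^{N}p^{\binom{N}{2}},$$
the right side being exactly the coefficient of $a^{N}$ in $\Theta(a)$.

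The crux is the evaluation of this finite sum. Multiplying through by $(p;p)_N$ and extracting $p^{N^{2}}$ turns the left side into $p^{N^{2}}\sum_{i=0}^{N}\begin{bmatrix}N\\i\end{bmatrix}_p(-1)^{i}p^{\binom{i}{2}}(p^{-N})^{i}$, which is precisely the finite $q$-binomial theorem $\sum_{i}\begin{bmatrix}N\\i\end{bmatrix}_p(-1)^{i}p^{\binom{i}{2}}x^{i}=(x;p)_N$ evaluated at $x=p^{-N}$. The simplification $(p^{-N};p)_N=(-1)^{N}p^{-\binom{N+1}{2}}(p;p)_N$, together with the elementary relation $N^{2}-\binom{N+1}{2}=\binom{N}{2}$, then reproduces $(-1)^{N}p^{\binom{N}{2}}(p;p)_N$, as required. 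I expect the main obstacle to be bookkeeping rather than ideas: one must spot that the inner sum is a $q$-binomial theorem with the negative-power argument $x=p^{-N}$, and keep the three quadratic exponents $j^{2}$, $\binom{m}{2}$ and $jm$ aligned through the substitution $i=N-j$.

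An alternative route, closer to the Rogers–Fine philosophy emphasized in the Introduction, avoids coefficient extraction: one verifies that $F(a):=(a;p)_\infty\sum_{j\ge0}\frac{p^{j^{2}}a^{j}}{(p;p)_j(a;p)_j}$ satisfies the same first-order $q$-difference equation $F(a)=1-aF(ap)$ and normalization $F(0)=1$ as $\Theta(a)$. Since that recursion determines every coefficient uniquely, the two series coincide. Either way, the reduction in the first paragraph is the essential step; everything after it is a standard $q$-series manipulation.
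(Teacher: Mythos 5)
Your proof is correct, but it takes a genuinely different route from the paper's. The paper treats the left-hand side as a limit of a basic hypergeometric series: it writes $\sum_{j\ge0}(-1)^jq^{j^2+2j(k+1)}=\lim_{\tau\to0}{_{2}}\phi_{1}\bigl(q^{2},q^{2k+3}/\tau;\tau;q^{2},\tau\bigr)$, applies Heine's second transformation \cite[(III.2)]{gasper}, and lets $\tau\to0$ inside the transformed series. Your argument instead strips out the parameter $k$ via $p=q^2$, $a=q^{2k+3}$, reducing the lemma to the two-variable partial theta identity $\sum_{j\ge0}(-1)^jp^{\binom{j}{2}}a^j=(a;p)_\infty\sum_{j\ge0}p^{j^2}a^j/\bigl((p;p)_j(a;p)_j\bigr)$, which you then prove by extracting the coefficient of $a^N$ using Euler's expansion of $(ap^j;p)_\infty$ and recognizing the resulting finite sum as the $q$-binomial theorem at $x=p^{-N}$; I have checked the exponent bookkeeping ($(N-i)^2+\binom{i}{2}+(N-i)i=N^2-iN+\binom{i}{2}$, and $N^2-\binom{N+1}{2}=\binom{N}{2}$) and it is right. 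What each approach buys: yours is elementary and fully self-contained, needing only Euler's identity and the finite $q$-binomial theorem, and it makes explicit that the lemma is a specialization of a single identity in $(a,p)$ rather than a one-parameter family; the paper's is shorter given the Gasper--Rahman citation and fits the basic-hypergeometric framework (Heine, Rogers--Fine) that organizes the whole article, at the cost of a limit interchange $\tau\to0$ that your formal coefficient argument avoids. One caveat on your sketched alternative: the $q$-difference verification is not as immediate as you suggest, since the natural splitting $(ap^j;p)_\infty=(1-ap^j)(ap^{j+1};p)_\infty$ yields $F(a)+aF(ap)=\sum_{j\ge0}p^{j^2}a^j(ap^{j+1};p)_\infty/(p;p)_j$, which is not visibly equal to $1$ without a further (Rogers--Fine-style) manipulation; since your main argument is complete, this does not affect correctness, but that route should not be presented as a routine check.
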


\begin{proof}
	To prove the lemma, we consider the second identity by Heine's transformation  of ${_{2}}\phi_{1}$ series \cite[(III.2)]{gasper}, namely
	$$
	{_{2}}\phi_{1}\bigg(\begin{matrix}a, b\\ c\end{matrix}\,;q,z\bigg)
	= \frac{(c/b;q)_\infty(bz;q)_\infty}{(c;q)_\infty(z;q)_\infty} 
	{_{2}}\phi_{1}\bigg(\begin{matrix}abz/c, b\\ bz\end{matrix}\,;q,c/b\bigg).
	$$
	We have
	\begin{align*}
	&\sum_{j=0}^\infty (-1)^j q^{j^2+2j(k+1)}\\
	&\qquad = \lim_{\tau\to 0} \sum_{j=0}^\infty \frac{(q^{2k+3}/\tau;q^2)_j}{(\tau;q^2)_j}\tau^j\\
	&\qquad = \lim_{\tau\to 0} {_{2}}\phi_{1}\bigg(\begin{matrix}q^{2},  q^{2k+3}/\tau\\ \tau\end{matrix}\,;q^{2},\tau\bigg)\\
	&\qquad = \lim_{\tau\to 0} \frac{(\tau^2/q^{2k+3};q^2)_\infty(q^{2k+3};q^2)_\infty}{(\tau;q^2)^2_\infty} {_{2}}\phi_{1}\bigg(\begin{matrix}q^{2k+5}/\tau,  q^{2k+3}/\tau\\ q^{2k+3}\end{matrix}\,;q^{2},\tau^2/q^{2k+3}\bigg)\\
	&\qquad = (q^{2k+3};q^2)_\infty \lim_{\tau\to 0}  \sum_{j=0}^\infty \frac{(q^{2k+5}/\tau;q^2)_j(q^{2k+3}/\tau;q^2)_j}{(q^2;q^2)_j(q^{2k+3};q^2)_j}\left( \frac{\tau^2}{q^{2k+3}}\right)^j\\
	&\qquad = (q^{2k+3};q^2)_\infty \lim_{\tau\to 0} \sum_{j=0}^\infty \frac{(-1)^j q^{j(j+1)}(q^{2k+3}/\tau;q^2)_j}{(q^2;q^2)_j(q^{2k+3};q^2)_j}\tau^j\\
	&\qquad = (q^{2k+3};q^2)_\infty \sum_{j=0}^\infty \frac{q^{j(2j+2k+3)}}{(q^2;q^2)_j(q^{2k+3};q^2)_j}
	\end{align*}	
	and the proof of this lemma is finished.
\end{proof}

With $q$ replaced by $q^r$, the Gauss identity \eqref{eq:4} becomes
$$1+2\sum_{n=1}^k (-1)^n q^{rn^2} = \frac{(q^r;q^r)_\infty}{(-q^r;q^r)_\infty}-2\sum_{n=k+1}^\infty (-1)^n q^{rn^2}.$$
Multiplying both sides of this identity by $(-q;q)_\infty$, we get
\allowdisplaybreaks{
	\begin{align*}
	&(-q;q)_\infty \left( 1+2\sum_{j=1}^k (-1)^j q^{rj^2} \right) \\
	&\quad = \frac{(-q;q)_\infty(q^r;q^r)_\infty}{(-q^r;q^r)_\infty} - 2(-q;q)_\infty \sum_{j=k+1}^{\infty} (-1)^j q^{rj^2}\\
	&\quad = \frac{(-q;q)_\infty(q^r;q^r)_\infty}{(-q^r;q^r)_\infty} + 2(-1)^k  \frac{q^{r(k+1)^2}}{(q;q^2)_\infty} \sum_{j=0}^{\infty} (-1)^j q^{rj^2+2rj(k+1)}\\
	&\quad =\frac{(-q;q)_\infty(q^r;q^r)_\infty}{(-q^r;q^r)_\infty} \\
	&\qquad + 2(-1)^k q^{r(k+1)^2} \frac{(q^{r(2k+3)};q^{2r})_\infty}{(q;q^2)_{\infty}}\sum_{j=0}^{\infty}\frac{q^{2rj^2+(2k+3)rj}}{(q^{2r};q^{2r})_j(q^{r(2k+3)};q^{2r})_j}
	\tag{By Lemma \ref{L:2.1}, with $q$ replaced by $q^2$.}\\
	&\quad =\frac{(-q;q)_\infty(q^r;q^r)_\infty}{(-q^r;q^r)_\infty}\\
	&\qquad + 2(-1)^k q^{r(k+1)^2} \frac{(q^{r};q^{2r})_\infty}{(q;q^2)_{\infty}(q^r;q^{2r})_{k+1}}\sum_{j=0}^{\infty}\frac{q^{(2k+2j+3)rj}}{(q^{2r};q^{2r})_j(q^{r(2k+3)};q^{2r})_j}\\
	&\quad =\frac{(-q;q)_\infty(q^r;q^r)_\infty}{(-q^r;q^r)_\infty} + 2(-1)^k q^{(k+1)^2} 
	\frac{(q^{r};q^{2r})_\infty}{(q;q^2)_{\infty}} \sum_{j=0}^{\infty}\frac{q^{(2k+2j+3)rj}}{(q^{2r};q^{2r})_j(q^r;q^{2r})_{k+j+1}}
	\end{align*}
}
and the proof is finished.

\section{Combinatorial interpretations}
\label{S3}

Regarding the inequality \eqref{eq:3}, we recall the following
partition theoretic interpretation given by Andrews and Merca \cite[Theorem 1]{Andrews12}:
\begin{equation*}
(-1)^{k-1}\sum_{j=0}^{2k-1} (-1)^{T_j} p(n-G_j) = M_k(n),
\end{equation*}
where $M_k(n)$ is the number of partitions of $n$ in which
$k$ is the least integer that is not a part and there are
more parts $>k$ than there are $<k$. In \cite{Yee} has
given a combinatorial proof of this result. Considering the generating function for $M_k(n)$, i.e.,
$$\sum_{n=0}^{\infty} M_k(n) q^n = \sum_{n=1}^{\infty} \frac{q^{\binom{k}{2}+(k+1)n}}{(q;q)_n}
\begin{bmatrix}
n-1\\k-1
\end{bmatrix},
$$
Theorem \ref{Th:1.1} can be written as:
\begin{align*}
(-q;q)_\infty \sum_{n=0}^{2k-1} (-1)^{T_n} q^{2G_n}- \frac{(q^2;q^2)_\infty}{(q;q^2)_\infty}
= (-1)^{k-1} \frac{(q^2;q^2)_\infty}{(q;q^2)_\infty} \sum_{n=0}^{\infty} M_k(n) q^{2n}
\end{align*}
or
\begin{align*}
& \left( \sum_{n=0}^\infty Q(n) q^n\right)  \left( \sum_{n=0}^{2k-1} (-1)^{T_n} q^{2G_n}\right) - \sum_{n=0}^\infty q^{T_n} \\
& \qquad\qquad\qquad\qquad= (-1)^{k-1} \left( \sum_{n=0}^\infty q^{T_n}\right) \left(  \sum_{n=0}^{\infty} M_k(n) q^{2n}\right),
\end{align*}
where we have invoked the theta identity \eqref{eq:5}.  In this way, we derive the following identity.

\begin{corollary}\label{cor:3.1}
	For $m,n\geqslant 0$, $k\geqslant 1$,
	$$(-1)^{k-1} \left( \sum_{j=0}^{2k-1} (-1)^{T_j} Q\left(n-2G_j\right) - \delta_{n,T_m} \right)  
	=\sum_{j=0}^\infty M_k\left(n/2-T_j/2 \right),
	$$
	where $M_k(x)=0$ if $x$ is not an integer.
\end{corollary}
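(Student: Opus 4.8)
The plan is to obtain Corollary \ref{cor:3.1} purely by extracting the coefficient of $q^n$ from the generating-function identity already displayed just above its statement, namely
$$\left( \sum_{i=0}^\infty Q(i) q^i\right)\left( \sum_{j=0}^{2k-1} (-1)^{T_j} q^{2G_j}\right) - \sum_{i=0}^\infty q^{T_i} = (-1)^{k-1}\left( \sum_{i=0}^\infty q^{T_i}\right)\left( \sum_{i=0}^{\infty} M_k(i) q^{2i}\right).$$
This identity is already in hand: it follows from Theorem \ref{Th:1.1} after writing $(-q;q)_\infty=\sum_{i\ge 0}Q(i)q^i$, using the theta identity \eqref{eq:5} in the form $\sum_{i\ge 0}q^{T_i}=(q^2;q^2)_\infty/(q;q^2)_\infty$, and inserting the generating function for $M_k(n)$ recorded above. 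So no new analytic input is required; the entire proof is a bookkeeping of Cauchy products.

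First I would read off the left-hand side. The Cauchy product of $\sum_i Q(i)q^i$ with the finite sum $\sum_{j=0}^{2k-1}(-1)^{T_j}q^{2G_j}$ contributes $\sum_{j=0}^{2k-1}(-1)^{T_j}Q(n-2G_j)$ to the coefficient of $q^n$, since each exponent $2G_j$ forces the $Q$-argument to be $n-2G_j$. The subtracted series $\sum_i q^{T_i}$ has all exponents distinct, so its coefficient of $q^n$ is $1$ exactly when $n$ is a triangular number and $0$ otherwise; this is precisely $\delta_{n,T_m}$ under the convention that $m$ denotes the unique index (if any) with $T_m=n$. Hence the coefficient of $q^n$ on the left is $\sum_{j=0}^{2k-1}(-1)^{T_j}Q(n-2G_j)-\delta_{n,T_m}$.

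Next I would read off the right-hand side. Ignoring the global factor $(-1)^{k-1}$, the Cauchy product of $\sum_i q^{T_i}$ with $\sum_i M_k(i)q^{2i}$ pairs $q^{T_j}$ with $q^{2i}$, so the coefficient of $q^n$ collects $M_k(i)$ over all $j$ with $T_j+2i=n$, i.e. $i=(n-T_j)/2$. This produces $\sum_{j\ge 0}M_k\!\left(\tfrac{n}{2}-\tfrac{T_j}{2}\right)$, where the convention $M_k(x)=0$ for non-integer $x$ automatically discards the terms in which $n-T_j$ is odd, and the nonnegative support of the $M_k$ generating function discards the terms with $T_j>n$. Thus the coefficient of $q^n$ on the right is $(-1)^{k-1}\sum_{j\ge 0}M_k\!\left(\tfrac{n}{2}-\tfrac{T_j}{2}\right)$.

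Equating the two coefficients and multiplying through by $(-1)^{k-1}$, with $(-1)^{2(k-1)}=1$, yields the asserted identity. The only point demanding any care is the half-integer bookkeeping in the argument of $M_k$ together with the parity constraint hidden in $q^{2i}$; but this is exactly what the stated convention $M_k(x)=0$ for non-integer $x$ is designed to absorb, so I expect it to be the single minor obstacle rather than a genuine difficulty.
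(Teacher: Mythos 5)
Your proposal is correct and follows essentially the same route as the paper: the paper itself obtains Corollary \ref{cor:3.1} by rewriting Theorem \ref{Th:1.1} via the generating function for $M_k(n)$ and the theta identity \eqref{eq:5}, and then extracting the coefficient of $q^n$ from exactly the displayed product identity you use, with the convention $M_k(x)=0$ for non-integer $x$ absorbing the parity bookkeeping just as you describe.
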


More explicitly, the right hand side of this identity can be rewritten as:
$$\sum_{j=0}^\infty M_k\left(n-T_j/2 \right) 
=\sum_{j=-\infty}^\infty M_k\big(n-j(4j-1) \big),
$$
and
$$\sum_{j=0}^\infty M_k\big((2n+1)/2-T_j/2 \big)  
=\sum_{j=-\infty}^\infty M_k\big(n-j(4j-3) \big).
$$

Very recently, Merca, Wang and Yee \cite{MWY} provided the following partition-theoretic interpretation of the sum in Theorem \ref{Th:1.4}.

\begin{theorem} \label{Th:3.2}
	For a fixed $k\geqslant 1$, 
	\begin{equation*}
	\sum_{n=0}^{\infty} {M_{o,k}}(n) q^n =q^{(k+1)^2} \sum_{j=0}^{\infty}\frac{q^{(2k+2j+3)j}}{(q^2;q^2)_j(q;q^2)_{k+j+1}},
	\end{equation*}
	where ${M_{o,k}}(n)$ counts partitions of $n$ into odd parts such that all odd numbers less than or equal to $2k+1$ occur as parts at least once and the parts below the $(k+2)$-Durfee rectangle in the $2$-modular graph are strictly less than the width of the rectangle.
\end{theorem}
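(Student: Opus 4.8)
The plan is to prove the identity combinatorially, reading off the generating function for $M_{o,k}(n)$ directly from the $2$-modular graph and organizing partitions according to the size of their $(k+2)$-Durfee rectangle. First I would record the basic dictionary: a partition $\lambda$ into odd parts $\lambda_1\geqslant\lambda_2\geqslant\cdots$ has a $2$-modular diagram whose $i$th row consists of $c_i=(\lambda_i+1)/2$ cells, all equal to $2$ except the rightmost, which equals $1$; thus $\lambda_i=2c_i-1$ and the total cell-sum is $n$. The $(k+2)$-Durfee rectangle is the largest rectangle of shape $d\times(d+k+2)$ fitting inside this diagram, so $d$ is the largest index with $c_d\geqslant d+k+2$. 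This $d$ will play the role of the summation variable $j$.

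Next I would split the diagram along row $d$. For the top $d$ rows each part satisfies $\lambda_i=(2d+2k+3)+2e_i$ with $e_1\geqslant\cdots\geqslant e_d\geqslant 0$, since $c_i\geqslant d+k+2$. The fixed cost of the $d\times(d+k+2)$ rectangle is $d(2d+2k+3)=2d^2+(2k+3)d$, while the free data $(e_1,\dots,e_d)$ form an arbitrary partition into at most $d$ parts weighted by $2\sum e_i$, contributing $1/(q^2;q^2)_d$. This reproduces exactly the factor $q^{(2k+2d+3)d}/(q^2;q^2)_d$.

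For the rows below the rectangle I would invoke the defining condition of $M_{o,k}$: these parts are \emph{strictly} less than the width $d+k+2$, i.e.\ $c_i\leqslant d+k+1$, hence they are odd numbers at most $2d+2k+1$. Among these, the values $1,3,\dots,2k+1$ are required to occur, and they automatically lie below the rectangle since $c_i\leqslant k+1<d+k+2$; this forces the minimal cost $1+3+\cdots+(2k+1)=(k+1)^2$, while every admissible odd value $1,3,\dots,2d+2k+1$ may be repeated freely, giving $q^{(k+1)^2}/(q;q^2)_{k+d+1}$. Taking the product of the two contributions and summing over all $d\geqslant 0$, then relabelling $d$ as $j$, yields precisely the right-hand side of the theorem.

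I expect the main obstacle to be verifying that this decomposition is a genuine bijection rather than a mere weight match: one must check that reconstructing $\lambda$ from $d$, the partition $(e_i)$, and the below-rectangle parts recovers a $2$-modular diagram whose $(k+2)$-Durfee rectangle again has height exactly $d$. The delicate point is that maximality of the rectangle by itself only forces $c_{d+1}\leqslant d+k+2$, whereas the hypothesis ``strictly less than the width'' sharpens this to $c_{d+1}\leqslant d+k+1$; it is exactly this hypothesis that simultaneously pins down the range of the below-parts and guarantees that the reconstructed Durfee height cannot exceed $d$. Making the boundary case airtight (the excluded parts equal to $2d+2k+3$) together with the requirement that all of $1,3,\dots,2k+1$ genuinely appear is where the argument needs the most care.
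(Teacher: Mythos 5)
Your proof is correct, and the two boundary issues you single out — excluding below-rectangle rows of $2$-modular length exactly $d+k+2$, and checking that the mandatory parts $1,3,\dots,2k+1$ automatically lie below the rectangle so that reconstruction returns Durfee height exactly $d$ — are precisely the points that make the dissection a genuine bijection, and you handle both. Note, though, that the present paper contains no proof of Theorem \ref{Th:3.2}: it is quoted from Merca, Wang and Yee \cite{MWY}, and the argument there is exactly the $(k+2)$-Durfee-rectangle dissection of the $2$-modular graph that you carry out (rectangle cost $q^{(2k+2j+3)j}$, free rows above giving $1/(q^2;q^2)_j$, mandatory odd parts giving $q^{(k+1)^2}$, free multiplicities below giving $1/(q;q^2)_{k+j+1}$), so your route coincides with the cited proof rather than with anything proved in this text.
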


For $r>1$, we denote by  $Q_{r}(n)$ the number of partitions of $n$ into distinct parts not congruent to $0$ modulo $r$. The generating function of $Q_{r}(n)$ is given by
$$\sum_{n=0}^\infty Q_r(n) q^n = \frac{(q^r;q^{2r})_\infty}{(q;q^2)_\infty}.$$
Considering Theorems \ref{Th:1.4} and \ref{Th:3.2}, we immediately deduced the following partition-theoretic interpretation of Corollary \ref{cor:1.5}.

\begin{corollary}\label{cor:3.2}
	For $m,n\geqslant 0$, $k\geqslant 1$,
	\begin{enumerate}
		\item[a)] $\displaystyle{ (-1)^{k}\left( Q(n)+2 \sum_{j=1}^{k} (-1)^j Q(n-j^2)-(-1)^{T_m} \delta_{n,G_m}\right) = 2M_{o,k}(n) },$
		\item[b)] $\displaystyle{ (-1)^{k}\left( Q(n)+2 \sum_{j=1}^{k} (-1)^j Q(n-2j^2)-(-1)^{T_{\lfloor m/2 \rfloor}}\delta_{n,G_m}\right)  }$
		\item[] $\displaystyle{\qquad = 2\sum_{j=0}^{\lfloor n/2 \rfloor} M_{o,k}(j) Q_{2}(n-2j)},$
		\item[c)] $\displaystyle{ (-1)^{k}\left( Q(n)+2 \sum_{j=1}^{k} (-1)^j Q(n-3j^2)-\delta_{n,G_m}\right)  }$
		\item[] $\displaystyle{\qquad = 2\sum_{j=0}^{\lfloor n/3 \rfloor} M_{o,k}(j) Q_{3}(n-3j)}.$
	\end{enumerate}
\end{corollary}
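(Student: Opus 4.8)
The plan is to read off all three identities by extracting the coefficient of $q^n$ from Theorem \ref{Th:1.4}, specialized to $r=1,2,3$, after rewriting the three factors that appear there in terms of suitable generating functions. The left-hand side of Theorem \ref{Th:1.4} is $(-q;q)_\infty\bigl(1+2\sum_{j=1}^k(-1)^jq^{rj^2}\bigr)$; since $\sum_{n\geq0}Q(n)q^n=(-q;q)_\infty$, its coefficient of $q^n$ is exactly $Q(n)+2\sum_{j=1}^k(-1)^jQ(n-rj^2)$, which is the bracketed expression (before the factor $(-1)^k$) on the left of each part of the corollary.

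First I would treat the ``theta'' term $\frac{(-q;q)_\infty(q^r;q^r)_\infty}{(-q^r;q^r)_\infty}$. For $r=1$ it collapses to $(q;q)_\infty$, whose coefficient of $q^n$ equals $(-1)^{T_m}$ when $n=G_m$ and $0$ otherwise, by Euler's pentagonal number theorem \eqref{eq:1}; this yields the term $(-1)^{T_m}\delta_{n,G_m}$ in part a). For $r=2$ and $r=3$ I would invoke the two identities displayed just before Corollary \ref{cor:1.5}, namely $\frac{(-q;q)_\infty(q^2;q^2)_\infty}{(-q^2;q^2)_\infty}=\sum_{j\geq0}(-1)^{T_{\lfloor j/2\rfloor}}q^{G_j}$ and $\frac{(-q;q)_\infty(q^3;q^3)_\infty}{(-q^3;q^3)_\infty}=\sum_{j\geq0}q^{G_j}$, which produce the terms $(-1)^{T_{\lfloor m/2\rfloor}}\delta_{n,G_m}$ and $\delta_{n,G_m}$ in parts b) and c) respectively.

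Next I would treat the combinatorial sum. Replacing $q$ by $q^r$ in Theorem \ref{Th:3.2} gives
\[
q^{r(k+1)^2}\sum_{j=0}^{\infty}\frac{q^{r(2k+2j+3)j}}{(q^{2r};q^{2r})_j(q^r;q^{2r})_{k+j+1}}=\sum_{n=0}^{\infty}M_{o,k}(n)q^{rn},
\]
which matches precisely the factor $q^{r(k+1)^2}\cdot(\text{sum})$ inside the second term of Theorem \ref{Th:1.4}. The remaining prefactor $\frac{(q^r;q^{2r})_\infty}{(q;q^2)_\infty}$ equals $1$ when $r=1$ and, for $r=2,3$, is the generating function $\sum_{n\geq0}Q_r(n)q^n$. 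Hence the entire second term on the right of Theorem \ref{Th:1.4} equals $2(-1)^k\bigl(\sum_nQ_r(n)q^n\bigr)\bigl(\sum_nM_{o,k}(n)q^{rn}\bigr)$, whose coefficient of $q^n$ is $2(-1)^kM_{o,k}(n)$ when $r=1$ and $2(-1)^k\sum_{j=0}^{\lfloor n/r\rfloor}M_{o,k}(j)Q_r(n-rj)$ when $r=2,3$.

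Finally I would equate the coefficients of $q^n$ on the two sides of Theorem \ref{Th:1.4}, move the theta term to the left, and multiply through by $(-1)^k$, which gives the three asserted identities. Beyond careful bookkeeping there is no real obstacle: the decisive point is that the dilation $q\mapsto q^r$ of Theorem \ref{Th:3.2} reproduces exactly the series occurring in Theorem \ref{Th:1.4}, while the leftover Euler-product prefactor is precisely the generating function of $Q_r$. The only steps demanding a little care are confirming that the exponent $r(2k+2j+3)j$ and the power $q^{r(k+1)^2}$ agree after the substitution, and that the $r=1$ case degenerates correctly, with prefactor $1$ and hence no convolution.
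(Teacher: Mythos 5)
Your proposal is correct and follows essentially the same route as the paper, which deduces Corollary \ref{cor:3.2} directly by combining Theorem \ref{Th:1.4} (specialized to $r=1,2,3$, with the pentagonal number theorem \eqref{eq:1} and the two displayed theta identities handling the term $\frac{(-q;q)_\infty(q^r;q^r)_\infty}{(-q^r;q^r)_\infty}$) with the dilation $q\mapsto q^r$ of Theorem \ref{Th:3.2} and the generating function $\frac{(q^r;q^{2r})_\infty}{(q;q^2)_\infty}=\sum_{n\geqslant 0}Q_r(n)q^n$. All your verifications (the matching exponents $r(2k+2j+3)j$ and $q^{r(k+1)^2}$, and the degenerate prefactor $1$ when $r=1$) check out, so there is nothing to correct.
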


We can rewrite Conjecture \ref{C:1} in the following equivalent form.

\begin{conjecture}\label{Cj:2}
	For $n\geqslant 0$, $k\geqslant 1$,
	$$\sum_{j=0}^{\lfloor n/3 \rfloor} M_{o,k}(j) Q_{3}(n-3j) \leqslant
	\sum_{j=0}^{\lfloor n/2 \rfloor} M_{o,k}(j) Q_{2}(n-2j) \leqslant
	M_{o,k}(n).$$
\end{conjecture}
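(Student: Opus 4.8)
The plan is to recast Conjecture \ref{Cj:2} as two coefficient inequalities between explicit power series and then to prove each one, preferably by a size-preserving injection and, failing that, by exhibiting a manifestly nonnegative $q$-series. First I would set $F(q)=\sum_{n\ge 0}M_{o,k}(n)\,q^n$ and, for $r\ge 1$, $S_r(n)=\sum_{j\ge 0}M_{o,k}(j)\,Q_r(n-rj)$. Since $Q_1(n)=\delta_{n,0}$ gives $S_1(n)=M_{o,k}(n)$, the three quantities in the conjecture are exactly $S_1(n)\ge S_2(n)\ge S_3(n)$. Multiplying the two stated generating functions and using $\sum_n Q_r(n)q^n=(q^r;q^{2r})_\infty/(q;q^2)_\infty=(-q;q)_\infty/(-q^r;q^r)_\infty$, one obtains the compact form
$$\sum_{n\ge 0}S_r(n)\,q^n=G_r(q):=F(q^r)\,\frac{(-q;q)_\infty}{(-q^r;q^r)_\infty},$$
so the entire conjecture reduces to the assertion that $G_1-G_2$ and $G_2-G_3$ have nonnegative power-series coefficients. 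Combinatorially, $S_r(n)$ counts pairs $(\lambda,\mu)$ with $\lambda$ an $M_{o,k}$-partition (as described in Theorem \ref{Th:3.2}), $\mu$ a partition into distinct parts not divisible by $r$, and $r|\lambda|+|\mu|=n$; equivalently it counts single partitions $\nu$ of $n$ whose parts divisible by $r$ are the $r$-fold dilation of an $M_{o,k}$-partition while the remaining parts are distinct and not divisible by $r$.

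For the inequality $S_1\ge S_2$ I would build an explicit injection from the $S_2$-pairs to the $S_1$-partitions: send $(\lambda,\mu)$ to the partition obtained by taking every part of $\lambda$ with its multiplicity doubled and then adjoining the distinct odd parts of $\mu$. Doubling keeps all parts odd and preserves the occurrence of every odd number $\le 2k+1$, so the image again lies among partitions into odd parts with the required parts present; the substance of the argument is to check that the image meets the Durfee-rectangle condition of Theorem \ref{Th:3.2} and that $(\lambda,\mu)$ can be recovered canonically from the merged partition. For $S_2\ge S_3$ I would argue at the level of the merged partitions $\nu$ above, injecting the $r=3$ configurations (parts $\equiv 3\pmod 6$ carrying the $M_{o,k}$-data, plus distinct parts not divisible by $3$) into the $r=2$ configurations (parts $\equiv 2\pmod 4$ carrying the $M_{o,k}$-data, plus distinct odd parts).

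The hard part will be twofold. First, the doubling-and-adjoining map is not injective as written, since a given odd value can arise both as a doubled part of $\lambda$ and as a part of $\mu$; reconstructing $(\lambda,\mu)$ demands a canonical parity-of-multiplicity rule, and I expect the interaction of that bookkeeping with the Durfee-rectangle constraint on the $2$-modular graph to be the genuine crux — preserving that geometric condition under the map is the single step most likely to force a cleverer construction. Second, the $S_2\ge S_3$ step is harder still, because the two residue patterns ($\equiv 2\pmod 4$ against $\equiv 3\pmod 6$) do not match up part-for-part, so no dilation-type injection is immediate; this is where I anticipate the argument to be most resistant, and it is presumably the reason the statement is still only conjectural.

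If a clean injection proves elusive, the fallback plan is to establish the two differences as positive $q$-series directly, in the spirit of the truncated-theta-series and Heine/Rogers--Fine manipulations used earlier in the paper. One cannot simply factor: writing $H(q)=F(q)/(-q;q)_\infty=\sum_m h_m q^m$ gives $G_r-G_{r+1}=(-q;q)_\infty\bigl(H(q^r)-H(q^{r+1})\bigr)$, but $H(q^r)-H(q^{r+1})$ has negative coefficients in general (for instance when $(r+1)\mid n$, $r\nmid n$, and $h_{n/(r+1)}>0$), so the smoothing by the positive factor $(-q;q)_\infty$ must be used essentially rather than discarded. The target under this plan is therefore to feed the explicit double sum for $F$ from Theorem \ref{Th:3.2} into a Heine transformation and collapse $G_1-G_2$ and $G_2-G_3$ into single series whose coefficients are visibly products of $q$-shifted factorials with nonnegative coefficients; once such a positive form is in hand, Conjecture \ref{Cj:2} — and with it Conjecture \ref{C:1}, to which it is equivalent via Corollary \ref{cor:3.2} — follows at once.
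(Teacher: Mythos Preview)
The statement you are attacking is labelled \emph{Conjecture} in the paper and is left open there; the paper gives no proof, only the remark that $Q_3(n)\ge Q_2(n)$ for all $n$, which makes the inequality ``more interesting'' precisely because it goes the opposite way from what that coefficient comparison would naively suggest. So there is no proof of the paper's to compare your proposal against.

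Your generating-function reformulation is correct and useful: with $F(q)=\sum_n M_{o,k}(n)q^n$ and $G_r(q)=F(q^r)\,(-q;q)_\infty/(-q^r;q^r)_\infty$, the conjecture is exactly the nonnegativity of the coefficients of $G_1-G_2$ and of $G_2-G_3$, and your observation that one cannot simply factor out $(-q;q)_\infty$ and compare $H(q^r)$ with $H(q^{r+1})$ is accurate and worth recording.

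However, what you have written is a plan, not a proof, and you yourself identify both genuine gaps. For $S_1\ge S_2$, the ``double every multiplicity of $\lambda$ and adjoin the distinct odd parts of $\mu$'' map does send an $S_2$-pair to a partition of $n$ into odd parts in which $1,3,\dots,2k+1$ all occur, but you have not shown that the image satisfies the Durfee-rectangle condition in Theorem~\ref{Th:3.2}: doubling multiplicities typically changes the $(k+2)$-Durfee rectangle in the $2$-modular graph, so the constraint ``parts below the rectangle are strictly less than its width'' need not persist; nor have you produced an inverse. For $S_2\ge S_3$ you offer no concrete map at all. The fallback $q$-series approach is likewise only a hope: you have not exhibited a manifestly positive expression for either $G_1-G_2$ or $G_2-G_3$.

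In short, your setup is sound and your diagnosis of where the difficulty lies is on target, but nothing here closes the gap; the statement remains, as in the paper, a conjecture.
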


Related to this conjecture, we remark that $Q_3(n)$ counts the partitions of $n$ into odd parts in which no part appears more than twice. More details about this combinatorial interpretation can be found in \cite{Alladi,Andrews00}. On the other hand, $Q_2(n)$ counts the partitions of $n$ into distinct odd parts. For $n\geqslant 0$, it is clear that
$$Q_3(n)\geqslant Q_2(n).$$
This inequality makes Conjecture \ref{Cj:2} more interesting.

Combinatorial proofs of Corollaries \ref{cor:3.1} and \ref{cor:3.2} would be very interesting.

\section{Two open problems}
\label{S4}

Related to the sum in the right hand side  of the truncated pentagonal number theorem \eqref{eq:2}, we remark 
that there is a substantial amount of numerical evidence to state 
the following conjecture.

\begin{conjecture}
	For $k\geqslant 1$, the theta series
	$$(-q;q)_\infty \sum_{n=0}^\infty (-1)^{T_n} q^{G_{n+2k}} = (q^2;q^2)_\infty \sum_{n=1}^{\infty} \frac{q^{\binom{k}{2}+(k+1)n}}{(q;q)_n} 
	\begin{bmatrix}
	n-1\\k-1
	\end{bmatrix}$$
	has non-negative coefficients.
\end{conjecture}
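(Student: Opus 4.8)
The displayed equality is the routine part of the statement, so I would dispose of it first and then isolate the real content, which is the positivity. Subtracting the truncated pentagonal number theorem \eqref{eq:2} from Euler's identity \eqref{eq:1}, and recalling the generating function $\sum_{n\ge 0}M_k(n)q^n=\sum_{n\ge 1}\frac{q^{\binom{k}{2}+(k+1)n}}{(q;q)_n}\begin{bmatrix}n-1\\k-1\end{bmatrix}$, gives $\sum_{n\ge 2k}(-1)^{T_n}q^{G_n}=(-1)^{k}(q;q)_\infty\sum_{n\ge 0}M_k(n)q^n$. Since $T_{n+2}-T_n=2n+3$ is odd, one has $(-1)^{T_{m+2k}}=(-1)^{k}(-1)^{T_m}$, so after the reindexing $n=m+2k$ the sign $(-1)^{k}$ cancels and $\sum_{m\ge 0}(-1)^{T_m}q^{G_{m+2k}}=(q;q)_\infty\sum_{n\ge 0}M_k(n)q^n$. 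Multiplying by $(-q;q)_\infty$ and using $(q;q)_\infty(-q;q)_\infty=(q^2;q^2)_\infty$ produces the displayed identity, both sides being equal to $(q^2;q^2)_\infty\sum_{n\ge 0}M_k(n)q^n$. Hence the conjecture is equivalent to the single family of inequalities
\[
\sum_{j=0}^{\infty}(-1)^{T_j}\,M_k(N-2G_j)\ \ge\ 0\qquad(N\ge 0),
\]
or, writing $(-q;q)_\infty=\sum_{n}Q(n)q^n$, to $\sum_{m\ge 0}(-1)^{T_m}\,Q(N-G_{m+2k})\ge 0$; note that these are \emph{infinite} alternating sums over a tail of the generalized pentagonal numbers, rather than the finite truncations appearing in Corollaries \ref{Cor:1.2} and \ref{cor:1.5}.

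For the positivity itself the primary plan would be combinatorial, in the spirit of Franklin's involution and of the bijective proofs of \eqref{eq:3} (Yee \cite{Yee}) and of the interpretations in \cite{MWY}. Reading $\sum_{m\ge 0}(-1)^{T_m}Q(N-G_{m+2k})$ as a signed count of pairs $(\lambda,m)$, where $\lambda$ is a partition of $N-G_{m+2k}$ into distinct parts weighted by the sign $(-1)^{T_m}$, I would try to construct a sign-reversing involution on these pairs whose fixed points all carry the positive sign $(-1)^{T_m}=+1$; then the sum equals the number of fixed points and is non-negative. The guiding model is that over \emph{all} $m\ge 0$ the analogous sum $\sum_{m\ge 0}(-1)^{T_m}Q(N-G_m)$ collapses, by $(q;q)_\infty(-q;q)_\infty=(q^2;q^2)_\infty$, to $[q^N](q^2;q^2)_\infty$; the conjecture is precisely the assertion that restricting Franklin's cancellation to the tail $m\ge 2k$ leaves a non-negatively signed residue, which would moreover furnish an explicit partition-theoretic interpretation of the coefficients.

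Two analytic alternatives are worth having in reserve. First, one may try to transform $(q^2;q^2)_\infty\sum_{n\ge 0}M_k(n)q^n$ directly into a series with patently non-negative coefficients by a Heine- or Rogers--Fine-type manipulation, exactly as Lemma \ref{L:2.1} was used to prove Theorem \ref{Th:1.4}; the aim would be an expansion as a positive combination of terms of the shape $q^{(\cdots)}\begin{bmatrix}\cdots\end{bmatrix}/(q;q)_{\cdots}$ with the signs of $(q^2;q^2)_\infty$ absorbed. Second, one could attempt induction on $k$, exploiting a telescoping relation between the $k$th and $(k+1)$st truncations of \eqref{eq:2} to reduce positivity for $k+1$ to positivity for $k$ together with a non-negative correction.

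The main obstacle is the factor $(q^2;q^2)_\infty$, which contributes genuine sign cancellation: unlike the situations in Theorems \ref{Th:1.1} and \ref{Th:1.4}, where truncation leaves behind a series that is transparently a non-negative sum of Gaussian binomials over $(q;q)_n$-type denominators, here no such transparent positive form is visible after multiplying the $M_k$-generating function by $(q^2;q^2)_\infty$. Concretely, the difficulty is to combine Franklin-type cancellation on the pentagonal tail with the delicate defining conditions of $M_k$ (that $k$ be the least missing part and that there be strictly more parts exceeding $k$ than falling below it); controlling the fixed-point set of the involution so that its sign is uniformly positive is where I expect the real work to lie.
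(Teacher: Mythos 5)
This statement is presented in the paper as a conjecture, not a theorem: the paper offers no proof of the non-negativity, only numerical evidence, and it explicitly leaves the associated combinatorics open at the end of Section \ref{S4}. Measured against that, your proposal does everything that can actually be checked. Your derivation of the displayed identity is correct and is the intended one: multiplying \eqref{eq:2} through by $(q;q)_\infty$ and subtracting it from Euler's theorem \eqref{eq:1} gives $\sum_{n\geqslant 2k}(-1)^{T_n}q^{G_n}=(-1)^{k}(q;q)_\infty\sum_{n\geqslant 1}M_k(n)q^n$; your parity computation $T_{m+2k}-T_m=2km+2k^2+k\equiv k\pmod 2$ correctly justifies cancelling the $(-1)^k$ after reindexing; and multiplying by $(-q;q)_\infty$ together with $(q;q)_\infty(-q;q)_\infty=(q^2;q^2)_\infty$ exhibits both sides as $(q^2;q^2)_\infty\sum_{n} M_k(n)q^n$. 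Your reformulation of the non-negativity claim as $\sum_{j\geqslant 0}(-1)^{T_j}M_k(n-2G_j)\geqslant 0$ for all $n$ agrees exactly with the paper's own remark following Inequality \ref{I:4.1}, and your observation that these are infinite alternating tails rather than the finite truncations of Corollaries \ref{Cor:1.2} and \ref{cor:1.5} is the right diagnosis of why the methods of Theorems \ref{Th:1.1} and \ref{Th:1.4} do not transfer.

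What you have not done --- and candidly say you have not done --- is prove the positivity itself: the Franklin-type involution restricted to the pentagonal tail, the Heine/Rogers--Fine transformation in the style of Lemma \ref{L:2.1}, and the induction on $k$ are all left as plans, with the crucial fixed-point analysis of the involution identified but not executed. So, judged as a proof, the proposal has a genuine gap: the entire content of the statement, namely the non-negativity, remains unestablished. But this is precisely the status of the statement in the paper, which asserts it only as a conjecture supported by numerical evidence; your attempt neither falls short of nor goes beyond the paper's position, except that it makes the routine identity part fully explicit and correctly locates the real obstruction --- the sign cancellation contributed by the factor $(q^2;q^2)_\infty$ interacting with the defining conditions of $M_k$ --- where any eventual proof will have to do its work.
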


Assuming this conjecture, we derive the following family of linear inequalities.

\begin{inequality}\label{I:4.1}
	For $m,n\geqslant 0$, $k\geqslant 1$,
	$$(-1)^{k-1} \left( \sum_{j=0}^{2k-1} (-1)^{T_j} Q\left(n-G_j\right) - (-1)^{T_m}\delta_{n,2G_m} \right) \geqslant 0$$
	with strict inequality if and only  if $n\geqslant G_{2k}$.
\end{inequality}

We notice that this inequality is equivalent with: for $n\geqslant 0$, $k\geqslant 1$,
$$\sum_{j=0}^\infty (-1)^{T_j} M_k(n-2G_j) \geqslant 0$$
with strict inequality if and only  if $n\geqslant G_{2k}$.

In \cite{Andrews17}, Andrews and Merca provide the following revision of \eqref{eq:7}:
\begin{align*}
& \frac{(-q;q^2)_\infty}{(q^2;q^2)_\infty} \sum_{j=0}^{2k-1}(-q)^{T_j}  \\
& \qquad = 1 - (-1)^k \frac{(-q;q^2)_k}{(q^2;q^2)_{k-1}} 
\sum_{j=0}^{\infty} \frac{q^{k(2j+2k+1)}(-q^{2j+2k+3};q^2)_{\infty}}{(q^{2k+2j+2};q^2)_{\infty}},
\end{align*}
with
$$\sum_{n=0}^\infty MP_k(n) q^n = \frac{(-q;q^2)_k}{(q^2;q^2)_{k-1}} 
\sum_{j=0}^{\infty} \frac{q^{k(2j+2k+1)}(-q^{2j+2k+3};q^2)_{\infty}}{(q^{2k+2j+2};q^2)_{\infty}},$$
where $MPk(n)$ is the number of partitions of $n$ in which the first part larger than $2k-1$ is odd and appears exactly $k$ times; all other odd parts appear at most once. A purely combinatorial proof of this result can be found in \cite{Ballantine}.
Related to the sum in the right hand side  of these identities, we remark 
the following conjecture.

\begin{conjecture}
	For $k\geqslant 1$, the theta series
	$$(-q;q)_\infty \sum_{n=0}^\infty (-1)^{T_n} q^{T_{n+2k}} = (q^4;q^4)_\infty \frac{(-q;q^2)_k}{(q^2;q^2)_{k-1}} 
	\sum_{j=0}^{\infty} \frac{q^{k(2j+2k+1)}(-q^{2j+2k+3};q^2)_{\infty}}{(q^{2k+2j+2};q^2)_{\infty}}$$
	has non-negative coefficients.
\end{conjecture}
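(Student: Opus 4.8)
The statement really has two parts: the displayed \emph{identity} and the \emph{non-negativity} of its coefficients. The identity itself is not the hard part; it follows by combining the Gauss identity \eqref{eq:5} with the Andrews--Merca revision of \eqref{eq:7} quoted just above it. The plan for the identity is to first isolate the tail of the Gauss theta series. Setting $N=n+2k$ and using the congruence
\[
T_{n+2k}-T_n=k(2n+2k+1),
\]
whose right-hand side is $k$ times an odd number, one gets $(-1)^{T_{n+2k}}=(-1)^{k}(-1)^{T_n}$, so that by \eqref{eq:5}
\[
\sum_{n=0}^{\infty}(-1)^{T_n}q^{T_{n+2k}}
=(-1)^{k}\Bigl(\tfrac{(q^2;q^2)_\infty}{(-q;q^2)_\infty}-\sum_{j=0}^{2k-1}(-q)^{T_j}\Bigr).
\]
Multiplying by $(-q;q)_\infty$ and using $(-q;q)_\infty=(-q;q^2)_\infty(-q^2;q^2)_\infty$ together with $(q^2;q^2)_\infty(-q^2;q^2)_\infty=(q^4;q^4)_\infty$, the first term collapses to $(-1)^k(q^4;q^4)_\infty$, while the Andrews--Merca revision gives $(-q;q)_\infty\sum_{j=0}^{2k-1}(-q)^{T_j}=(q^4;q^4)_\infty\bigl(1-(-1)^k\sum_n MP_k(n)q^n\bigr)$. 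The two contributions combine, the powers of $-1$ cancel, and one lands exactly on $(q^4;q^4)_\infty\sum_n MP_k(n)q^n$.

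The genuine content is the non-negativity, which I do not expect to be routine. Expanding $(q^4;q^4)_\infty$ by Euler's pentagonal number theorem (with $q$ replaced by $q^4$) turns the claim into the assertion that, for every $N\geqslant 0$ and $k\geqslant 1$,
\[
\sum_{m=-\infty}^{\infty}(-1)^m MP_k\bigl(N-2m(3m-1)\bigr)\geqslant 0,
\]
i.e.\ that the full pentagonal alternating sum of the statistic $MP_k$ is non-negative. Equivalently, reading coefficients off the left-hand side, $\sum_{n\geqslant 0}(-1)^{T_n}Q(N-T_{n+2k})\geqslant 0$ for all $N$. This is precisely the triangular-number analogue of the companion conjecture asserting that $(q^2;q^2)_\infty\sum_n M_k(n)q^n$ has non-negative coefficients, and the difficulty is the same: multiplying the non-negative series $\sum_n MP_k(n)q^n$ by a fully alternating theta product introduces cancellation that must be controlled.

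I would pursue the non-negativity along two complementary lines. The combinatorial route is to use the explicit description of $MP_k(n)$ (partitions in which the first part exceeding $2k-1$ is odd and occurs exactly $k$ times, all other odd parts occurring at most once) and to build a sign-reversing involution on the partitions counted with a minus sign by the pentagonal expansion, leaving an injection onto the positively counted ones; this is in the spirit of the combinatorial proofs of Yee \cite{Yee} and Ballantine \cite{Ballantine} cited above, and it would simultaneously supply the combinatorial interpretation the paper asks for. The analytic route is to start from the explicit $q$-series for $\sum_n MP_k(n)q^n$ displayed before the conjecture, multiply through by $(q^4;q^4)_\infty$, and regroup the result into a manifestly non-negative $q$-series; here one would try to linearize the inner $j$-sum by a Heine- or Rogers--Fine-type transformation, exactly as Lemma \ref{L:2.1} linearizes a theta tail in the proof of Theorem \ref{Th:1.4}, so that the surviving theta quotient has non-negative coefficients by inspection.

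The main obstacle is the non-negativity, and specifically the absence of a clean single-signed remainder. In every theorem proved in this paper the truncation produces a tail whose terms carry a \emph{common} sign $(-1)^{k-1}$ or $(-1)^k$, which is what makes the inequalities immediate; here, by contrast, one multiplies by the \emph{entire} product $(q^4;q^4)_\infty$ rather than by a truncated theta series, so no uniformly signed remainder is available and the positivity must come from genuine cancellation among infinitely many terms. Since the companion conjecture on $M_k$ is open for the same structural reason, I would expect a successful attack to be uniform in the two cases: a single involution, or a single transformation identity valid for both $(q^2;q^2)_\infty\sum_n M_k(n)q^n$ and $(q^4;q^4)_\infty\sum_n MP_k(n)q^n$, seems the most likely path to a proof.
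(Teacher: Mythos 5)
The statement you were asked to prove is labelled a \emph{conjecture} in the paper, and the paper supplies no proof of it: the non-negativity is supported only by numerical evidence, so there is no ``paper proof'' to match against. Given that, your proposal does exactly the right thing. Your derivation of the displayed identity is correct and is precisely the route the paper implicitly relies on: the parity observation $T_{n+2k}-T_n=k(2n+2k+1)$ reduces the left side to $(-1)^k$ times the tail of Gauss's identity \eqref{eq:5}; the Andrews--Merca revision of \eqref{eq:7}, rewritten as $\sum_{j=0}^{2k-1}(-q)^{T_j}=\frac{(q^2;q^2)_\infty}{(-q;q^2)_\infty}\bigl(1-(-1)^k\sum_n MP_k(n)q^n\bigr)$, together with $(-q;q)_\infty=(-q;q^2)_\infty(-q^2;q^2)_\infty$ and $(q^2;q^2)_\infty(-q^2;q^2)_\infty=(q^4;q^4)_\infty$, collapses everything to $(q^4;q^4)_\infty\sum_n MP_k(n)q^n$, which is the stated right-hand side. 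Your two reformulations of the non-negativity also agree with the paper's own remarks: reading coefficients gives the tail inequality behind Inequality \ref{I:4.2}, and expanding $(q^4;q^4)_\infty$ pentagonally gives exactly the paper's equivalent form $\sum_{j\geqslant 0}(-1)^{T_j}MP_k(n-4G_j)\geqslant 0$. You correctly diagnose why the non-negativity is not routine (the multiplier is a full alternating product, not a truncation, so no uniformly signed remainder survives) and honestly leave it open, proposing an involution in the spirit of Yee and Ballantine et al.\ or a Heine-type linearization as in Lemma \ref{L:2.1}; these remain programmatic, as they must, since the conjecture is open in the paper as well. In short: the provable component is proved correctly and by the same method the paper presupposes, and the conjectural component is accurately identified as such rather than being papered over.
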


Assuming this conjecture, we derive a new infinity family of linear inequalities.

\begin{inequality}\label{I:4.2}
	For $m,n\geqslant 0$, $k\geqslant 1$,
	$$(-1)^{k-1} \left( \sum_{j=0}^{2k-1} (-1)^{T_j} Q\left(n-T_j\right) - (-1)^{T_m}\cdot \delta_{n,4G_m} \right)  \geqslant 0,$$
	with strict inequality if and only if  $n\geqslant T_{2k}$.	
\end{inequality}

We remark that this inequality can be rewritten in terms of $MP_k(n)$ as follows: for $n\geqslant 0$, $k\geqslant 1$,
$$\sum_{j=0}^\infty (-1)^{T_j} MP_{k} (n-4G_j) \geqslant 0,$$
with strict inequality if and only if  $n\geqslant T_{2k}$.

The limiting case $k\to\infty$ of Identities \ref{I:4.1} and \ref{I:4.2} give the following linear recurrence relations:
\begin{align*}
&\sum_{j=0}^\infty (-1)^{T_j} Q(n-G_j) 
=\begin{cases}
(-1)^{T_{m}}, & \text{if $n=2G_m$, $m\in\mathbb{N}_0$,}\\
0, & \text{otherwise,}
\end{cases}\\
&\sum_{j=0}^\infty (-1)^{T_j} Q(n-T_j) 
=\begin{cases}
(-1)^{T_{m}}, & \text{if $n=4G_m$, $m\in\mathbb{N}_0$,}\\
0, & \text{otherwise.}
\end{cases}
\end{align*}

The effectiveness of these recurrence relations can not be called into  question because the number of terms in each relations is greater than $\sqrt{n/3}$.
As consequences of these recurrence relations, we can derive the following parity results.

\begin{corollary}
	Let $n$ be a positive integer.
	\begin{enumerate}
		\item[a)] The number of representations of $n$ as the sum of two generalized pentagonal numbers is odd if and only if $n$ is a twice generalized pentagonal number.
		\item[b)] The number of representations of $n$ as the sum of a generalized pentagonal number and a triangular number is odd if and only if $n$ is a four times generalized pentagonal number.
	\end{enumerate}
\end{corollary}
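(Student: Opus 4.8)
The plan is to derive both parts directly from the two recurrence relations displayed just above the corollary by reducing them modulo $2$. The only nontrivial ingredient is a parity formula for $Q(n)$, which I would isolate first.

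First I would record the mod-$2$ behavior of $Q$. Since $1+q^k\equiv 1-q^k \pmod 2$ for every $k$, we have $(-q;q)_\infty\equiv (q;q)_\infty\pmod 2$ as formal power series; combined with the generating function $\sum_{n\geq 0}Q(n)q^n=(-q;q)_\infty$ and Euler's pentagonal number theorem \eqref{eq:1}, this yields
$$\sum_{n=0}^\infty Q(n)q^n\equiv \sum_{j=0}^\infty(-1)^{T_j}q^{G_j}\pmod 2.$$
Because the generalized pentagonal numbers $G_0,G_1,G_2,\dots$ are pairwise distinct, comparing coefficients gives $Q(n)\equiv 1\pmod 2$ when $n$ is a generalized pentagonal number and $Q(n)\equiv 0\pmod 2$ otherwise. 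This congruence is what I will feed into the recurrences.

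Next, for part a), I would reduce the recurrence $\sum_{j\geq 0}(-1)^{T_j}Q(n-G_j)=(-1)^{T_m}\delta_{n,2G_m}$ modulo $2$. On the left, every sign $(-1)^{T_j}$ becomes $1$, and each $Q(n-G_j)$ becomes $1$ exactly when $n-G_j$ is a generalized pentagonal number; hence the left side is congruent to the number of pairs $(i,j)$ with $G_i+G_j=n$, that is, the number of representations of $n$ as a sum of two generalized pentagonal numbers. On the right, $(-1)^{T_m}\equiv 1$, and since the values $2G_m$ are pairwise distinct the right side equals $1$ precisely when $n$ is twice a generalized pentagonal number and $0$ otherwise. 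Comparing the two congruences gives the stated equivalence. Part b) is identical in structure: reducing $\sum_{j\geq 0}(-1)^{T_j}Q(n-T_j)=(-1)^{T_m}\delta_{n,4G_m}$ modulo $2$ turns the left side into the number of pairs $(i,j)$ with $G_i+T_j=n$ (a generalized pentagonal number plus a triangular number) and the right side into the indicator that $n$ is four times a generalized pentagonal number.

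The computations here are routine; the only point demanding care is the congruence $Q(n)\equiv 1\pmod 2$ iff $n$ is a generalized pentagonal number, together with the observation that the sequences $(G_m)$, $(2G_m)$, $(4G_m)$, and $(T_j)$ are injective, so that each mod-$2$ coefficient extraction picks out a single term rather than producing an accidental cancellation. I do not anticipate a genuine obstacle, but I would double-check that allowing $G_0=T_0=0$ as a summand is consistent with the intended counting convention in the statement.
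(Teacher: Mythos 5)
Your proof is correct and follows essentially the same route the paper intends when it says the corollary is a consequence of the two displayed recurrences: reduce each recurrence modulo $2$, using the classical parity fact that $Q(n)$ is odd exactly when $n$ is a generalized pentagonal number, which follows as you say from $(-q;q)_\infty\equiv(q;q)_\infty\pmod{2}$ together with Euler's pentagonal number theorem \eqref{eq:1}. Your reading of ``representations'' as counting pairs $(i,j)$ (so that, in part a), $G_i+G_j=n$ and $G_j+G_i=n$ are counted separately when $i\neq j$) is the interpretation consistent with the recurrences and with the paper's earlier parity result, Corollary \ref{Cor:1.3}.
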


Finally, we remark that the linear recurrence relations presented in this paper can be easily derived considering the Jacobi triple product identity. 
It is still an open problem to give partition interpretations for our truncated sums in Inequalities \ref{I:4.1} and \ref{I:4.2}.




\bigskip



\begin{thebibliography}{00}


\bibitem{Alladi}
K. Alladi,  
\textit{Refinements of Rogers-Ramanujan type identities}, in Special functions, $q$-series and related topics (Toronto, ON, 1995), 
Fields Inst. Commun., \textbf{14}, Amer. Math. Soc., Providence, RI, 1997, 1--35.

\bibitem{Andrews76}
G.E. Andrews, 
\emph{The Theory of Partitions}, 
Addison-Wesley Publishing, 1976.

\bibitem{Andrews00}
G.E. Andrews,
\textit{Schur's theorem, partitions with odd parts and the Al-Salam-Carlitz polynomials}, in $q$-series from a contemporary perspective, 
Contemporary Mathematics, M.E.H. Ismail and D. Stanton, eds., Vol. \textbf{254}, Amer. Math. Soc., Providence, RI, (2000), 45--56.


\bibitem{Andrews12} 
G.E. Andrews, M. Merca, 
The truncated pentagonal number theorem, 
\textit{J. Combin. Theory Ser. A}, \textbf{119} (2012) 1639--1643.

\bibitem{Andrews17} 
G.E. Andrews, M. Merca, 
Truncated Theta Series and a Problem of Guo and Zeng, 
\textit{J. Combin. Theory Ser. A}, \textbf{154} (2018) 610--619.

\bibitem{Ballantine}
C. Ballantine, M. Merca, D. Passary, and A.J. Yee, Combinatorial proofs of two truncated theta series theorems,
\textit{J. Combin. Theory Ser. A} \textbf{160} (2018) 168--185.

\bibitem{Chan}			
S.H. Chan, T.P.N. Ho, R. Mao, 
Truncated series from the quintuple product identity, 
\textit{J. Number Theory} \textbf{169} (2016) 420--438.

\bibitem{Chern}
S. Chern,
A further look at the truncated pentagonal number theorem,
\textit{Acta Arith.} \textbf{189} (2019) 397--403 

\bibitem{gasper}
G. Gasper, M. Rahman,  Basic hypergeometric series,
Encycl. Math. Appl. Cambridge University Press, Cambridge (2004).

\bibitem{Guo} 
V.J.W. Guo, J. Zeng, 
Two truncated identities of Gauss, 
\textit{J. Combin. Theory Ser. A}, \textbf{120} (2013) 700--707.

\bibitem{He}	
T.Y. He, K.Q. Ji, W.J.T. Zang, 
Bilateral truncated Jacobi's identity, 
\textit{European J. Combin.} \textbf{51} (2016) 255--267.

\bibitem{Kolitsch}	
L. Kolitsch, 
Another approach to the truncated pentagonal number theorem, 
\textit{Int. J. Number Theory} \textbf{11}(5) (2015)  1563--1569.

\bibitem{KolitschB}	
L. Kolitsch, M. Burnette,
Interpreting the Truncated Pentagonal Number Theorem using Partition Pairs, 
\textit{Electron. J. Combin.} \textbf{22}(2) (2015)  \#P2.55.

\bibitem{Mao}	
R. Mao, 
Proofs of two conjectures on truncated series, 
\textit{J. Comb. Th., Ser. A} \textbf{130} (2015) 15--25.

\bibitem{Mao17}	
R. Mao,  
Some new expansions for certain truncated $q$-series, 
\textit{Ramanujan J} \textbf{46} (2018) 475--481.

\bibitem{Merca}
M. Merca,
Combinatorial interpretations of a recent convolution for the number of divisors of a positive integer,
\textit{J. Number Theory} \textbf{160} (2016) 60--75.

\bibitem{Merca16}
M. Merca, 
A new look on the truncated pentagonal number theorem, 
\textit{Carpathian J. Math.}, \textbf{32} (2016) 97--101.

\bibitem{MWY}
M. Merca, C. Wang, A.J. Yee,
A truncated theta identity of Gauss and overpartitions into odd parts, 
\textit{Annals of Combinatorics}, \textbf{23} (2019) 907--915.


\bibitem{Rogers}	
L.J. Rogers, 
On two theorems of combinatory analysis and some allied identities,
\textit{Proc. London Math. Soc., Ser. 2} \textbf{16} (1916) 315--336.

\bibitem{WY1}	
C. Wang, A.J. Yee, Truncated Hecke-Rogers type series, 
\textit{Advances in Mathematics}, \textbf{365} (2020) Article 107051.

\bibitem{WY2}	
C. Wang, A.J. Yee, Truncated Jacobi triple product series, 
\textit{J. Comb. Th., Ser. A}  \textbf{166} (2019) 382--392.


\bibitem{Yee}	
A.J. Yee, Truncated Jacobi triple product theorems, 
\textit{J. Comb. Th., Ser. A} \textbf{130} (2015) 1--14.


\end{thebibliography}
\end{document}